\newtheorem{thm}{Theorem}[section]
\newtheorem{cor}[thm]{Corollary}
\newtheorem{lem}[thm]{Lemma}
\newtheorem{prop}[thm]{Proposition}
\newtheorem{rmk}[thm]{Remark}
\newcommand{\R}{{\mathbb{R}}}
\newcommand{\1}{\partial}
\newcommand{\3}{\varepsilon}
\newcommand{\4}{\widetilde}
\begin{document}
\title{Global behaviour of  solutions of the fast diffusion equation}
\author{Shu-Yu Hsu\\
%\thanks{ }\\
Department of Mathematics\\
National Chung Cheng University\\
168 University Road, Min-Hsiung\\
Chia-Yi 621, Taiwan, R.O.C.\\
e-mail: shuyu.sy@gmail.com}
\date{Dec 21, 2017}
\smallbreak \maketitle
\begin{abstract}
We will extend a recent result of B.~Choi and P.~Daskalopoulos (\cite{CD}). For any $n\ge 3$, $0<m<\frac{n-2}{n}$, $m\ne\frac{n-2}{n+2}$, $\beta>0$ and $\lambda>0$, we prove the higher order expansion of the radially symmetric solution $v_{\lambda,\beta}(r)$
of $\frac{n-1}{m}\Delta v^m+\frac{2\beta}{1-m} v+\beta x\cdot\nabla v=0$ in $\R^n$, $v(0)=\lambda$, as $r\to\infty$. As a consequence for any $n\ge 3$ and $0<m<\frac{n-2}{n}$ 
if $u$ is the solution  of the equation $u_t=\frac{n-1}{m}\Delta u^m$ in $\R^n\times (0,\infty)$ with initial value $0\le u_0\in L^{\infty}(\R^n)$ satisfying $u_0(x)^{1-m}=
\frac{2(n-1)(n-2-nm)}{(1-m)\beta |x|^2}\left(\log |x|-\frac{n-2-(n+2)m}{2(n-2-nm)}\log (\log |x|)+K_1+o(1))\right)$ as $|x|\to\infty$ for some constants $\beta>0$ and  $K_1\in\R$, then as $t\to\infty$ the rescaled function $\4{u}(x,t)=e^{\frac{2\beta}{1-m}t}u(e^{\beta t}x,t)$ converges uniformly  on every compact subsets of $\R^n$ to  $v_{\lambda_1,\beta}$ for some constant $\lambda_1>0$.
\end{abstract}

\vskip 0.2truein

Key words: higher order expansion, decay rate, large time behaviour, fast diffusion equation

AMS 2010 Mathematics Subject Classification: Primary 35B40
Secondary 35B20, 35B09 

\vskip 0.2truein
\setcounter{section}{0}

\section{Introduction}
\setcounter{equation}{0}
\setcounter{thm}{0}

Recently there is a lot of interest in the following singular diffusion
equation \cite{A}, \cite{DK}, \cite{P}, \cite{V},
\begin{equation}\label{fde-eqn}
u_t=\frac{n-1}{m}\Delta u^m\quad\mbox{ in }\R^n\times (0,T)
\end{equation}
which arises in the study of many physical models and geometric flows. When $0<m<1$,
\eqref{fde-eqn} is called the fast diffusion equation.  On the other hand as observed by P.~Daskalopoulos, M.~del Pino, J.~King, M.~S\'aez, N.~Sesum, and others \cite{DPKS1}, \cite{DPKS2}, \cite{PS}, the metric $g=u^{\frac{4}{n-2}}dy^2$ satisfies the Yamabe flow \cite{B1}, \cite{B2},
\begin{equation}\label{log1-eqn}
\frac{\1 g}{\1 t}=-Rg
\end{equation}
on $\R^n$, $n\ge 3$, for $0<t<T$, where $R$ is the scalar curvature of the metric $g$, if and only if $u$ satisfies \eqref{fde-eqn} with
\begin{equation*}
m=\frac{n-2}{n+2}.
\end{equation*}
For
\begin{equation}\label{m-range}
n\ge 3\quad\mbox{ and }\quad 0<m<\frac{n-2}{n}, 
\end{equation}
asymptotic behaviour of solution of \eqref{fde-eqn} near finite
extinction time was studied by Galaktionov and L.A.~Peletier \cite{GP}
and P.~Daskalopoulos and N.~Sesum \cite{DS}. Extinction profile
for solutions of \eqref{fde-eqn} for the case $m=\frac{n-2}{n+2}$, $n\ge 3$, was studied
in \cite{PS} by M.~del Pino and M.~S\'aez. Such extinction behaviour occurs for the solution of \eqref{fde-eqn} when \eqref{m-range} holds and the solution $u\in L^{\infty}(\R^n\times (0,T))$ satisfies
\begin{equation*}
u(x,t)\le C|x|^{-\frac{2}{1-m}}\quad\forall |x|\ge R_1, 0\le t<T
\end{equation*}
for some constants $C>0$ and $R_1>0$ (cf. \cite{DS}). On the other hand if \eqref{m-range} holds and $0\le u_0\in L_{loc}^p(\R^n)$ for some constant $p>\frac{n(1-m)}{2}$ which satisfies 
\begin{equation*}
\liminf_{R\to\infty}\frac{1}{R^{n-\frac{2}{1-m}}}\int_{|x|\le R}u_0\,dx
=\infty,
\end{equation*}
then S.Y.~Hsu \cite{Hs2} proved the existence and uniqueness of solutions of 
\begin{equation}\label{cauchy-problem}
\left\{\begin{aligned}
u_t=&\frac{n-1}{m}\Delta u^m\quad\mbox{ in }\R^n\times (0,\infty)\\
u(x,0)=&u_0(x)\qquad\quad\mbox{ in }\R^n.
\end{aligned}\right.
\end{equation}
These results say that we will have either global existence of solution of \eqref{fde-eqn} or extinction in finite time for solution of \eqref{fde-eqn}  depending on whether the growth rate of the solution is large enough at infinity. 
When \eqref{m-range} holds, existence, uniqueness and decay rate of self-similar solutions of \eqref{fde-eqn} were also proved by S.Y.~Hsu in \cite{Hs1}.
Interested reader can read the book
\cite{DK} by P.~Daskalopoulos and C.E.~Kenig and the book \cite{V} by
J.L.~Vazquez for the most recent results on \eqref{fde-eqn}.

In the recent paper
\cite{CD} B.~Choi and P.~Daskalopoulos proved the higher order expansion of the radially symmetric solution $v_{\lambda,\beta}(r)$ of
\begin{equation}\label{elliptic-eqn}
\left\{\begin{aligned}
&\frac{n-1}{m}\Delta v^m+\frac{2\beta}{1-m} v+\beta x\cdot\nabla v=0,\quad v>0,\quad\mbox{ in }\R^n\\
&v(0)=\lambda\end{aligned}\right.
\end{equation}
 for any constant $\lambda>0$ where $m=\frac{n-2}{n+2}$, $n\ge 3$, as $r\to\infty$. They also proved that if $u$ is the solution of \eqref{cauchy-problem} in $\R^n\times (0,\infty)$ with $m=\frac{n-2}{n+2}$, $n\ge 3$,  and initial value $u_0\ge 0$ satisfying 
\begin{equation*}
u_0(x)^{1-m}\approx \frac{(n-1)(n-2)}{\beta |x|^2}\left(\log |x|+K_1+o(1)\right)\quad\mbox{ as }r=|x|\to\infty
\end{equation*}
for some constants $\beta>0$ and $K_1\in\R$, then as $t\to\infty$ the rescaled function 
\begin{equation}\label{rescaled-soln}
\4{u}(x,t)=e^{\frac{2\beta}{1-m}t}u(e^{\beta t}x,t)
\end{equation}
converges uniformly on every compact subsets of $\R^n$ to  $v_{\lambda_1,\beta}(x)$ for some constant $\lambda_1>0$.
Note that for any solution $u$ of \eqref{fde-eqn} in $\R^n\times (0,\infty)$, $\4{u}$ satisfies
\begin{equation}\label{u-tilde-eqn}
\4{u}_t=\frac{n-1}{m}\Delta\4{u}+\frac{2\beta}{1-m}\4{u}+\beta x\cdot\nabla\4{u}\quad\mbox{ in }\R^n\times (0,\infty).
\end{equation}
This result of B.~Choi and P.~Daskalopoulos \cite{CD} shows that the asymptotic large time behaviour of the solution of \eqref{cauchy-problem} depends critically on the higher order expansion of the initial value of the solution. Moreover the asymptotic large time behaviour of the solution of \eqref{cauchy-problem} is after a rescaling similar to the solution of \eqref{elliptic-eqn} with the same higher order expansion. Hence it is important to study the higher order expansion of the solution of \eqref{elliptic-eqn}.

In this paper we will extend the result of \cite{CD}. We will prove the higher order expansion of the radially symmetric solution $v_{\lambda,\beta}(r)$ of \eqref{elliptic-eqn} as $r\to\infty$ for any $n\ge 3$, $0<m<\frac{n-2}{n}$, $m\ne\frac{n-2}{n+2}$, $\beta>0$ and $\lambda>0$. We will also prove that 
when $n\ge 3$, $0<m<\frac{n-2}{n}$, and
\begin{equation*}
u_0(x)^{1-m}\approx \frac{2(n-1)(n-2-nm)}{(1-m)\beta |x|^2}\left(\log |x|-\frac{n-2-(n+2)m}{2(n-2-nm)}\log (\log |x|)+K_1+0(1)\right)
\end{equation*}
as $r=|x|\to\infty$ for some constants $\beta>0$ and $K_1\in\R$, then as $t\to\infty$ the rescaled solution $\4{u}$ given by \eqref{rescaled-soln} will converges uniformly on every compact subsets of $\R^n$ to the radially symmetric solution $v_{\lambda_1,\beta}(x)$ of \eqref{elliptic-eqn} for some constant $\lambda=\lambda_1>0$.

We first start with a definition. For any $0\le u_0\in L_{loc}^1(\R^n)$, we say that a function $u$ is a solution of \eqref{cauchy-problem}
if $u>0$  in $\R^n\times (0,\infty)$  is a classical solution of \eqref{fde-eqn} in $\R^n\times (0,\infty)$ and
\begin{equation*}
\|u(\cdot, t)-u_0\|_{L^1(E)}\to 0\quad\mbox{ as }t\to 0
\end{equation*}
for any compact subset $E$ of $\R^n$. For any $\lambda>0$ and $\beta>0$, we say that $v$ is a solution of \eqref{elliptic-eqn} if $v$ is a positive classical solution of \eqref{elliptic-eqn} in $\R^n$. When there is no ambiguity we will drop the subscript and write $v$ for the radially symmetric solution $v_{\lambda,\beta}$ of \eqref{elliptic-eqn}. Let 
\begin{equation}\label{w-defn}
w(s)=r^2v(r)^{1-m}\quad\mbox{ and }\quad s=\log r.
\end{equation}
We will assume that $n\ge 3$, $0<m<\frac{n-2}{n}$,  $\beta>0$, $\lambda>0$ and $w$ be given by \eqref{w-defn} for the rest of this paper. Unless stated otherwise we will also assume that $m\ne\frac{n-2}{n+2}$.

We obtain the following two main theorems in this paper.

\begin{thm}\label{higher-order-expansion-thm}
Let $n\ge 3$, $0<m<\frac{n-2}{n}$, $m\ne\frac{n-2}{n+2}$, $\beta>0$ and $\lambda>0$. Let $v_{\lambda,\beta}(r)$ be the radially symmetric solution of \eqref{elliptic-eqn} given by \cite{Hs1}. Then there exists a constant $K_0$ independent of $\beta$ and $\lambda$ and a constant $K(\lambda,\beta)$ such that
\begin{align}\label{v-lambda-beta-expansion1}
v_{\lambda,\beta}(r)^{1-m}=&\frac{2(n-1)(n-2-nm)}{(1-m)\beta r^2}\left\{\log r -\frac{n-2-(n+2)m}{2(n-2-nm)}\log (\log r))+\frac{1-m}{2}\log\lambda
\right.\notag\\
&\qquad \left.+\frac{1}{2}\log\beta +K_0+\frac{a_0}{\log r}
+\frac{(n-2-(n+2)m)^2}{4(n-2-nm)^2}\cdot\frac{\log(\log r)}{\log r}+\frac{o(\log r)}{\log r}\right\}
\end{align}
as $r\to\infty$ where
\begin{equation}\label{a0-defn}
a_0=\frac{(n-2-(n+2)m)^2}{4(n-2-nm)^2}-\frac{(1-m)^2 a_1(1,1)}{4(n-1)(n-2-nm)^2}
\end{equation}
and
\begin{align}\label{a1-defn}
a_1(\lambda,\beta)=&\frac{2(1-2m)(n-1)(n-2-nm)}{(1-m)^2}+\frac{(n-1)(n-2-(n+2)m)^2}{(1-m)^2}\notag\\
&\qquad +\frac{(n-2-(n+2)m)}{(1-m)}K(\lambda,\beta)\beta.
\end{align}
\end{thm}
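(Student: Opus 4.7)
The plan is to convert \eqref{elliptic-eqn}, written radially, into a second-order ODE for $w(s) = r^2 v(r)^{1-m}$ with $s = \log r$, and then extract the expansion term by term. A direct substitution $v = (w/r^2)^{1/(1-m)}$ into the radial form $\frac{n-1}{m}\bigl[(v^m)''+\frac{n-1}{r}(v^m)'\bigr]+\frac{2\beta}{1-m}v+\beta r v'=0$ produces an autonomous-looking second-order ODE in $s$ whose dominant balance at infinity is between the drift term and the zeroth-order term. The scaling $\tilde v(x)=\mu v(\sigma x)$ with $\sigma=\mu^{(1-m)/2}$ preserves \eqref{elliptic-eqn} while sending $\lambda$ to $\mu\lambda$; combined with the companion $\beta$-rescaling, this reduces the entire theorem to the canonical choice $(\lambda,\beta)=(1,1)$, modulo a bookkeeping step that tracks how the integration constant $K(\lambda,\beta)$ transforms and that automatically accounts for the shift $\frac{1-m}{2}\log\lambda + \frac{1}{2}\log\beta$ in \eqref{v-lambda-beta-expansion1}.

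Taking as given the first-order asymptotic $v(r)^{1-m}\sim \frac{2(n-1)(n-2-nm)}{(1-m)\beta r^2}\log r$ from \cite{Hs1}, which translates to $w(s)/s\to A_0:=\frac{2(n-1)(n-2-nm)}{(1-m)\beta}$, I would refine by a sequence of ans\"atze
\begin{equation*}
w(s) = A_0 s + A_1 \log s + A_2 + A_3\frac{\log s}{s} + \frac{A_4}{s} + R(s),
\end{equation*}
plugging each into the ODE and matching orders. At each stage one needs a progressively sharper remainder estimate: first $R(s)=o(\log s)$, then $R(s)=o(1)$, then $R(s)=o(\log s/s)$. The coefficients $A_0, A_1, A_3$ are forced algebraically, are independent of the initial data, and account respectively for the leading $\log r$, the correction $-\frac{n-2-(n+2)m}{2(n-2-nm)}\log\log r$, and the third-order term $\frac{(n-2-(n+2)m)^2}{4(n-2-nm)^2}\cdot\frac{\log\log r}{\log r}$ appearing in \eqref{v-lambda-beta-expansion1}. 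The constant $A_2$ carries the dimensionless piece $K_0$, while $A_4$ is where the single free integration constant $K(\lambda,\beta)$ first appears, giving \eqref{a1-defn} after unwinding back to $v^{1-m}$.

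The hardest step is the $\log s/s$ refinement, because the linearisation of the ODE at the log-affine profile $A_0 s + A_1 \log s$ has a resonance at exactly that scale: its two homogeneous solutions have Wronskian decaying like $1/s$, so a straightforward variation-of-parameters argument loses a logarithm and yields only $O(\log s/s)$ in place of the needed $o(\log s/s)$. I would circumvent this by rewriting the remainder equation as a first-order linear equation for $\dot R$ with an explicit integrating factor built from the linearised operator, isolating the resonant piece of the forcing (which is precisely what determines $A_3$ cleanly), and then showing that the remaining forcing is integrable at the sharper rate. The constant of integration emerging at this step becomes $K(\lambda,\beta)$, and substituting its canonical value $a_1(1,1)$ into the matching identity produces the explicit formula \eqref{a0-defn} for $a_0$.
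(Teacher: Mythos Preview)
Your outline is essentially the paper's own argument: the substitution $w(s)=r^2v^{1-m}$, the successive peeling off of $A_0 s$, $A_1\log s$, the constant, and the $(\log s)/s$ and $1/s$ corrections, each time reducing to a first-order linear equation for the derivative and reading off the next coefficient via an integrating factor (the paper's $f(s)$ in \eqref{f-defn}) together with l'H\^opital; the final reduction to $(\lambda,\beta)=(1,1)$ via the scaling identity is also what the paper does.

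One point is mis-stated and would trip you up if you tried to execute the matching as written. The free integration constant $K(\lambda,\beta)$ does \emph{not} first appear in the $1/s$ coefficient $A_4$; it \emph{is} the constant term $A_2$. In the paper's notation, after subtracting $A_0 s+A_1\log s$ one shows $s^2h_{1,s}/\log s$ has a finite limit (your $A_3$ step), and integrating that bound yields both the universal $A_3$ \emph{and} the existence of $K(\lambda,\beta):=\lim_{s\to\infty}h_1(s)$, which is precisely $A_2$ and is the one piece of the expansion that genuinely depends on the initial value $\lambda$ (for $(\lambda,\beta)=(1,1)$ one then sets $K_0=\frac{(1-m)K(1,1)}{2(n-1)(n-2-nm)}$). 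The coefficient $A_4$ is subsequently \emph{determined} by $K(\lambda,\beta)$ through \eqref{a1-defn}, not the other way round. So your claim that ``$A_2$ carries the dimensionless piece $K_0$'' while ``$A_4$ is where the single free integration constant first appears'' inverts the logical order; if you tried to fix $A_2$ algebraically you would find no equation for it, and the expansion would stall at $o(1)$.
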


\begin{thm}\label{convergence-thm}
Let $n\ge 3$, $0<m<\frac{n-2}{n}$, $m\ne\frac{n-2}{n+2}$, $\beta>0$, $0\le u_0=\phi+\psi$, $ \phi\in  L_{loc}^p(\R^n)$, $\psi\in L^1(\R^n)\cap L_{loc}^p(\R^n)$, for some constant $p>\frac{(1-m)n}{2}$, be such that
\begin{align}\label{phi-expansion}
\phi(x)^{1-m}=&\frac{2(n-1)(n-2-nm)}{(1-m)\beta |x|^2}\left(\log |x|-\frac{n-2-(n+2)m}{2(n-2-nm)}\log (\log |x|)+K_1+o(1)\right)\mbox{ as }|x|\to\infty
\end{align}
for some constant $K_1\in\R$.
If $u$ is the unique solution of \eqref{fde-eqn} in $\R^n\times (0,\infty)$ given by
Theorem 1.1 of \cite{Hs2}, then as $t\to\infty$, the rescaled function $\4{u}(x,t)$ given by \eqref{rescaled-soln} converges to  $v_{\lambda_1,\beta}$ in $L_{loc}^1(\R^n)$ with $\lambda_1=\left(e^{2K_1/K_0}/\beta\right)^{\frac{1}{1-m}}$ where the constant $K_0$ is given by Theorem \ref{higher-order-expansion-thm}.

Moreover if $u_0$ also satisfies $u_0=\phi\in L^{\infty}(\R^n)$, then as $t\to\infty$,  $\4{u}(x,t)$  also converges to  $v_{\lambda_1,\beta}$ uniformly in $C^{2,1}(E)$ for any compact subset $E\subset\R^n$. 
\end{thm}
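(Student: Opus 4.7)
The plan is to extend the Choi--Daskalopoulos argument to the full range (1.3) using Theorem \ref{higher-order-expansion-thm} as the mechanism that pins down $\lambda_1$ uniquely. I would proceed in four stages: (i) construct sub/super barriers from the one-parameter family $\{v_{\lambda,\beta}\}_{\lambda>0}$; (ii) establish compactness of the rescaled flow $\4u$ via interior regularity for (1.1); (iii) identify every $\omega$-limit point as the radial stationary solution $v_{\lambda_1,\beta}$; and (iv) upgrade the regularity of the convergence under the extra hypothesis $u_0\in L^\infty$.

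For Stage (i), Theorem \ref{higher-order-expansion-thm} shows that $\lambda$ enters the first three terms of $v_{\lambda,\beta}^{1-m}$ only through the strictly increasing summand $\frac{1-m}{2}\log\lambda$, while the $\log r$, $\log(\log r)$, $\tfrac12\log\beta$ and $K_0$ contributions are $\lambda$-independent. Matching this with (1.9) forces the unique $\lambda_1$ determined by $K_1=\frac{1-m}{2}\log\lambda_1+\frac12\log\beta+K_0$, which is the one given in the statement. For each $\3>0$ I can then choose $\5\lambda<\lambda_1<\2\lambda$ with $|\2\lambda-\5\lambda|<\3$ and $R_\3$ large enough that $v_{\5\lambda,\beta}(x)\le \phi(x)\le v_{\2\lambda,\beta}(x)$ for $|x|\ge R_\3$. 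A compactly supported correction absorbs the behaviour on $|x|<R_\3$ and the $L^1$-perturbation $\psi$. Applying the comparison principle for (1.1) together with $L^1$-contraction and the $L^1$--$L^\infty$ smoothing of \cite{Hs2}, and then rescaling via (1.6), yields pointwise two-sided bounds on $\4u(\cdot,t)$ by $v_{\5\lambda,\beta}$ and $v_{\2\lambda,\beta}$ modulo errors that vanish uniformly on compact subsets of $\R^n$ as $t\to\infty$.

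For Stages (ii)--(iii), interior H\"older and Harnack estimates for (1.1) in the range (1.3) give equicontinuity of $\{\4u(\cdot,t)\}_{t\ge 1}$ on every compact $K\subset\R^n$, so along any $t_k\to\infty$ a subsequence converges in $L^1_{loc}(\R^n)$ to some $W$. Passing to the limit in the rescaled equation (1.7) on time-windows $s\in[0,1]$ (using equicontinuity in $t$ as well) shows that $W$ is a classical stationary solution of (1.7), hence a solution of (1.4). Radial symmetry of $W$ follows from applying the comparison principle to the rotations $\4u(Qx,t)$, $Q\in O(n)$, trapped between the radial barriers. Letting $\3\downarrow 0$ in $v_{\5\lambda,\beta}\le W\le v_{\2\lambda,\beta}$ identifies $W=v_{\lambda_1,\beta}$ independently of the subsequence, giving $\4u(\cdot,t)\to v_{\lambda_1,\beta}$ in $L^1_{loc}$. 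For Stage (iv), if $u_0=\phi\in L^\infty$, the upper barrier gives a uniform $L^\infty$ bound on $\4u$; on compact sets where the lower barrier is also strictly positive, (1.7) is uniformly parabolic, and standard interior Schauder estimates promote $L^1_{loc}$ convergence to $C^{2,1}_{loc}$ convergence.

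The principal obstacle is the identification in Stage (iii): excluding non-stationary $\omega$-limits and pinning $\lambda_1$ to the precise value in the statement. The new $\log(\log r)$ correction in Theorem \ref{higher-order-expansion-thm}, which is absent in the Yamabe case $m=(n-2)/(n+2)$ treated by \cite{CD}, is indispensable here: without matching it one obtains only a one-sided sandwich and $\lambda_1$ cannot be recovered. The structural fact that $K_0$ in Theorem \ref{higher-order-expansion-thm} is independent of both $\lambda$ and $\beta$ is exactly what turns the matching of the three leading terms into a bijection between $\lambda$ and $K_1$, providing the rigidity needed to force full convergence to a single member of the family $\{v_{\lambda,\beta}\}$.
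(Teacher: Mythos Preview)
Your barrier construction in Stage~(i) and the identification of $\lambda_1$ via Theorem~\ref{higher-order-expansion-thm} match the paper's strategy exactly. The difference lies in how you pass from the sandwich to convergence. The paper avoids your compactness/$\omega$-limit machinery of Stages~(ii)--(iii) entirely: having set $u_1,u_2$ to be the solutions with initial data $\min(v_{\lambda_1-\varepsilon,\beta}+\psi,u_0)$ and $\max(v_{\lambda_1+\varepsilon,\beta}+\psi,u_0)$, it invokes the weighted $L^1$-contraction (Lemma~\ref{L1-comparison-lem}) to get directly
\[
\|\widetilde u_i(\cdot,t)-v_{\lambda_1\mp\varepsilon,\beta}\|_{L^1(\R^n)}\le e^{-\frac{n-2-nm}{1-m}t}\,C_\varepsilon,
\]
and since $\widetilde u_1\le\widetilde u\le\widetilde u_2$, sending $t\to\infty$ and then $\varepsilon\to 0$ already gives $L^1_{loc}$ convergence to $v_{\lambda_1,\beta}$. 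No subsequence extraction, no identification of stationary $\omega$-limits, and no radial-symmetry argument for the limit is needed: the barriers are themselves radial and the exponential $L^1$-decay does all the work. Your Stage~(iv) is essentially the same as the paper's.

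Your route would also succeed, but Stage~(iii) as written has a gap: equicontinuity in $t$ does \emph{not} by itself force an $\omega$-limit $W$ to be stationary---you obtain an eternal solution of \eqref{u-tilde-eqn}, not a steady state. One normally closes this via a Lyapunov functional, and here the natural candidate is precisely the $L^1$-distance to a barrier, which decays monotonically by Lemma~\ref{L1-comparison-lem}. Once you grant yourself that, the compactness detour becomes redundant and you are back to the paper's direct argument. So the cleanest repair is simply to drop Stages~(ii)--(iii) and conclude straight from the $L^1$-contraction you already invoked in Stage~(i).
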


\section{Proofs}
\setcounter{equation}{0}
\setcounter{thm}{0}

In this section we will give the proof of Theorem \ref{higher-order-expansion-thm} and Theorem \ref{convergence-thm}. We first recall some results of \cite{Hs1}, \cite{DS} and \cite{HK}.

\begin{thm}\label{w's-bd-thm}(Theorem 1.3  of \cite{Hs1} and its proof)
Let $v$ be the unique radially symmetric solution of \eqref{elliptic-eqn} and $w$ be given by \eqref{w-defn}. Then 
\begin{equation}\label{w-decay-infty}
\lim_{|x|\to\infty}\frac{|x|^2v(x)^{1-m}}{\log |x|}=\lim_{s\to\infty}\frac{w(s)}{s}=\lim_{s\to\infty}w_s(s)
=\frac{2(n-1)(n-2-nm)}{(1-m)\beta}.
\end{equation}
\end{thm}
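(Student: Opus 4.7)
The plan is to transform the radial elliptic ODE for $v$ into an autonomous second-order ODE for $w(s)=r^{2}v^{1-m}$ in the variable $s=\log r$, and then extract the limit by combining an a priori bound $w(s)=O(s)$ with a dominant-balance argument as $s\to\infty$. Writing the radial Laplacian as $\Delta v^{m}=r^{-2}[(v^{m})_{ss}+(n-2)(v^{m})_{s}]$, using $r v_{r}=v_{s}$, and exploiting the identity $v_{s}/v=(w_{s}-2w)/[(1-m)w]$ implied by the definition of $w$, a direct computation rewrites \eqref{elliptic-eqn} as
\begin{equation*}
(n-1)(w w_{ss}-w_{s}^{2})+\tfrac{m(n-1)}{1-m}(w_{s}-2w)^{2}+(n-1)(n-2)\,w(w_{s}-2w)+\beta w_{s}w^{2}=0
\end{equation*}
on $s\in\R$, with the boundary behaviour $w(s)\to 0$ as $s\to-\infty$ dictated by $v(0)=\lambda$.

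Next I would install the rough bound $w(s)\le C s$ for large $s$ by comparison with explicit super- and sub-solutions of Barenblatt type $C\,r^{-2/(1-m)}(\log r)^{1/(1-m)}$; such barriers are compatible with the sign structure of \eqref{elliptic-eqn} throughout the fast diffusion range $0<m<(n-2)/n$. Under the ansatz $w\sim L s$ one has $w_{s}^{2}=o(w^{2})$ and $ww_{ss}=o(w^{2})$, so after dividing the displayed ODE by $w^{2}$ the leading balance reduces to
\begin{equation*}
\tfrac{4m(n-1)}{1-m}-2(n-1)(n-2)+\beta w_{s}\approx 0,
\end{equation*}
whose unique solution is $w_{s}\to L:=\tfrac{2(n-1)(n-2-nm)}{(1-m)\beta}$, exactly the value asserted in the statement.

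To promote this formal balance to a rigorous limit, I would establish, in order: (i) $w$ is eventually monotone increasing and $w(s)\to\infty$; (ii) $w_{s}$ is bounded above and bounded below by a positive constant for large $s$; and (iii) $w_{ss}(s)\to 0$. Step (i) follows from the positivity of $v$ and a sign analysis of the ODE; step (ii) from the a priori bound combined with the reduced balance applied at any candidate limit point of $w_{s}$. Step (iii) is the crux: one must rule out persistent oscillations of $w_{s}$, which I would do by exploiting the damping term $\beta w_{s}w^{2}$ together with the monotonicity of $w$ to obtain an integral identity forcing convergence. Once (iii) is in place, the reduced algebraic relation gives $w_{s}(s)\to L$, and then $w(s)/s\to L$ follows by L'H\^opital (or Stolz--Ces\`aro), yielding the three equalities in \eqref{w-decay-infty} after translating via $w=r^{2}v^{1-m}$ and $s=\log r$. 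The principal obstacle is step (iii); the earlier steps are fairly standard, but excluding oscillations in the presence of the degenerate coefficient $w^{2}$ requires a careful choice of Lyapunov-type quantity tailored to the autonomous ODE above.
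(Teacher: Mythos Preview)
The paper does not supply its own proof of this theorem; it simply cites Theorem~1.3 of \cite{Hs1} and its proof. So there is no argument in the present paper to compare your sketch against directly. What the paper \emph{does} provide is the ODE \eqref{w-eqn} for $w$, and a repeated technique in Lemmas~\ref{h-limit-lem1}--\ref{h2-limit-lem}: rewrite the equation as a first-order linear ODE for the relevant derivative, multiply by the integrating factor $f$ of \eqref{f-defn}, integrate, and apply l'H\^opital to extract the limit. That method---rather than a dominant-balance plus ``$w_{ss}\to 0$'' argument---is the style of \cite{Hs1} and of this paper.

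On the merits of your proposal: your ODE is correct (it is equivalent to \eqref{w-eqn} after dividing by $(n-1)w$), and the formal balance giving $w_s\to L=\tfrac{2(n-1)(n-2-nm)}{(1-m)\beta}$ is right. But the rigorous part has a genuine gap at step~(iii). You propose to show $w_{ss}\to 0$ via ``an integral identity forcing convergence'' using the damping term $\beta w_s w^2$, but no such identity is exhibited, and this is not a minor detail: excluding oscillation of $w_s$ is exactly the content of the theorem. In fact $w_{ss}\to 0$ is stronger than you need---from \eqref{w-eqn}, once you know $w\to\infty$ and $w_s$ is bounded, the term $\tfrac{\beta}{n-1}(L-w_s)w$ dominates, so merely $w_{ss}=o(w)$ forces $w_s\to L$. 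Even so, you must still prove $w_{ss}=o(w)$, and your outline does not indicate how. Your step~(ii) is also not free: to get a uniform positive lower bound on $w_s$ you would need either a monotonicity result for $v$ (which is the substantive input from \cite{Hs1}) or a genuine sub-solution comparison, and the claim that $C r^{-2/(1-m)}(\log r)^{1/(1-m)}$ furnishes barriers ``compatible with the sign structure'' is asserted rather than checked. If you want to repair the argument in the spirit of this paper, write \eqref{w-eqn} as $(g(s)w_s)_s=g(s)\,R(s)$ with $g$ an integrating factor depending on $\int w$, verify $g\to\infty$, and use l'H\^opital as in the proof of Lemma~\ref{h-limit-lem1}; this bypasses the need for your step~(iii) entirely.
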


\begin{lem}(cf. Corollary 2.2 of \cite{DS} and Lemma 2.2 of \cite{HK})\label{L1-comparison-lem}
Let $0\le u_{0,1}, u_{0,2}\in L_{loc}^1(\R^n)$ be such that $u_{0,1}-u_{0,2}\in L^1(\R^n)$. Suppose $u_1$, $u_2$, are solutions of \eqref{fde-eqn} in $\R^n\times (0,\infty)$ with initial values $u_{0,1}, u_{0,2}$ respectively such that for any $T>0$ there exist constants $C_1>0$, $R_1>0$, such that
\begin{equation*}
u_i(x,t)\ge C_1|x|^{-\frac{2}{1-m}}\quad\forall |x|\ge R_1, 0<t<T, i=1,2.
\end{equation*}
Then
\begin{equation*}
\|u_1(\cdot,t)-u_2(\cdot,t)\|_{L^1(\R^n)}\le \|u_{0,1}-u_{0,2}\|_{L^1(\R^n)}\quad\forall t>0
\end{equation*}
and hence
\begin{equation*}
\|\4{u}_1(\cdot,t)-\4{u}_2(\cdot,t)\|_{L^1(\R^n)}\le e^{-\frac{(n-2-nm)}{1-m}t} \|\4{u}_{0,1}-\4{u}_{0,2}\|_{L^1(\R^n)}\quad\forall t >0
\end{equation*}
where $\4{u}_1(\cdot,t)$, $\4{u}_2(\cdot,t)$, are the rescaled solutions of $u_1$, $u_2$, given by \eqref{rescaled-soln}.
\end{lem}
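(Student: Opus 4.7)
The plan is to establish the first inequality by the standard Kato-type $L^1$-contraction argument for nonlinear diffusion equations, and then to obtain the second inequality from the first via a direct change of variables. Set $w=u_1-u_2$, so that $w$ solves $w_t=\frac{n-1}{m}\Delta(u_1^m-u_2^m)$. Because $s\mapsto s^m$ is monotone increasing on $[0,\infty)$, the sign of $w$ coincides with that of $u_1^m-u_2^m$. I would regularize $|\cdot|$ by a smooth convex $j_\varepsilon$ with $j_\varepsilon'\to\mathrm{sgn}$, introduce a spatial cutoff $\psi_R(x)=\eta(|x|/R)$ with $\eta\equiv 1$ on $[0,1]$ and $\mathrm{supp}\,\eta\subset [0,2]$ so that $|\Delta\psi_R|\le C/R^2$ with support in $\{R\le|x|\le 2R\}$, multiply the equation for $w$ by $j_\varepsilon'(w)\psi_R$, and integrate by parts twice. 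The resulting $j_\varepsilon''(w)$ term has the favourable sign because $\nabla w$ and $\nabla(u_1^m-u_2^m)$ are aligned through the increasing map $s\mapsto s^m$; in the limit $\varepsilon\to 0$ one arrives at the Kato-type inequality
\[
\frac{d}{dt}\int_{\R^n}|w|\,\psi_R\,dx\le \frac{n-1}{m}\int_{\R^n}|u_1^m-u_2^m|\,\Delta\psi_R\,dx.
\]

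To pass to $R\to\infty$, I apply the mean value theorem $|u_1^m-u_2^m|=m\,\xi^{m-1}|u_1-u_2|$ with $\xi$ between $u_1$ and $u_2$. The hypothesis $u_i(x,t)\ge C_1|x|^{-\frac{2}{1-m}}$ gives $\xi^{m-1}\le C|x|^2$ for $|x|\ge R_1$, hence $|u_1^m-u_2^m|\le C|x|^2|u_1-u_2|$ on that region. Combined with $|\Delta\psi_R|\le C/R^2$ supported in $\{R\le|x|\le 2R\}$, the right-hand side is dominated by $C\int_{R\le|x|\le 2R}|u_1-u_2|\,dx$. A standard bootstrap (first localizing to the case where $u_{0,1}-u_{0,2}$ has compact support, then extending by $L^1$-density using the cutoff inequality with a Gr\"onwall step on a compact time interval) secures that $u_1(\cdot,t)-u_2(\cdot,t)\in L^1(\R^n)$ with norm controlled by $\|u_{0,1}-u_{0,2}\|_{L^1}$ uniformly on compact time intervals, so the tail vanishes as $R\to\infty$. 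Integrating the resulting differential inequality in $t$ yields $\|u_1(\cdot,t)-u_2(\cdot,t)\|_{L^1(\R^n)}\le\|u_{0,1}-u_{0,2}\|_{L^1(\R^n)}$.

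The second inequality is then a routine change of variables. Substituting $y=e^{\beta t}x$ in the definition \eqref{rescaled-soln} of $\4{u}_i$ gives $dy=e^{n\beta t}\,dx$ and
\[
\|\4{u}_1(\cdot,t)-\4{u}_2(\cdot,t)\|_{L^1(\R^n)}=e^{\bigl(\frac{2}{1-m}-n\bigr)\beta t}\|u_1(\cdot,t)-u_2(\cdot,t)\|_{L^1(\R^n)},
\]
and since $\4{u}_i(\cdot,0)=u_{0,i}$, combining this identity at time $t$ with the first inequality produces the stated exponential decay. The main obstacle in the whole argument is precisely the cutoff step: a naive bound $|u_1^m-u_2^m|\le u_1^m+u_2^m$ is not integrable at infinity, so one is forced to exploit the quantitative lower bound on the $u_i$ via the mean value theorem to convert the integrand into $O(|x|^2)|u_1-u_2|$, which then couples with the $|\Delta\psi_R|=O(R^{-2})$ gain from the cutoff to produce a vanishing tail.
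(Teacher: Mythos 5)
The paper itself does not prove this lemma --- it quotes it from Corollary 2.2 of \cite{DS} and Lemma 2.2 of \cite{HK} --- and your outline follows the same strategy as those references: a localized Kato-type $L^1$ estimate in which the hypothesis $u_i\ge C_1|x|^{-\frac{2}{1-m}}$ plus the mean value theorem converts $|u_1^m-u_2^m|$ into $O(|x|^2)|u_1-u_2|$, cancelling the $O(R^{-2})$ gain from $\Delta\psi_R$, followed by the elementary scaling identity for the second inequality. Your change of variables is correct; note that it yields the factor $e^{-\frac{(n-2-nm)\beta}{1-m}t}$, i.e.\ with a $\beta$ in the exponent which the statement as printed in the paper omits.

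Two steps of your sketch, however, are not right as written. First, the claimed favourable sign of the $j_\varepsilon''$ term rests on the assertion that $\nabla(u_1-u_2)\cdot\nabla(u_1^m-u_2^m)\ge 0$ because $s\mapsto s^m$ is increasing; this is false pointwise, since $\nabla(u_1^m-u_2^m)=m\,u_1^{m-1}\nabla u_1-m\,u_2^{m-1}\nabla u_2$ need not be aligned with $\nabla u_1-\nabla u_2$ (take $\nabla u_1=2\nabla u_2$ with $u_1\gg u_2$ at a point). The standard repair is to test with $p_\varepsilon(u_1^m-u_2^m)$, an approximation of the sign function applied to $v=u_1^m-u_2^m$ (which has the same sign as $w=u_1-u_2$), so that the quadratic term is $p_\varepsilon'(v)|\nabla v|^2\ge 0$ and the same limiting inequality $\frac{d}{dt}\int|w|\psi_R\le \frac{n-1}{m}\int|v|\,|\Delta\psi_R|$ results. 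Second, and more seriously, the ``standard bootstrap\,/\,Gr\"onwall step'' does not close as described: the right-hand side of your cutoff inequality is $C\int_{R\le|x|\le 2R}|u_1-u_2|\,dx$, which is not dominated by the localized quantity $\int|u_1-u_2|\psi_R\,dx$ that you are differentiating, so Gr\"onwall cannot be applied directly; and reducing to compactly supported $u_{0,1}-u_{0,2}$ by $L^1$-density presupposes a stability estimate that is essentially the lemma itself. What is actually needed, and what constitutes the real content of the proofs in \cite{DS} and \cite{HK}, is an a priori local-in-space $L^1$ control (Herrero--Pierre-type estimates for fast diffusion, or the specific construction of the solutions as in \cite{Hs2}) guaranteeing that $\sup_{0<s<t}\int_{B_{2R}\setminus B_R}|u_1-u_2|(\cdot,s)\,dx$ is finite and tends to $0$ as $R\to\infty$, so that the tail term vanishes and the contraction follows upon integrating in time. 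So your key cancellation idea is the correct one, but the integrability-of-the-difference step is asserted rather than proved, and as stated the argument has a genuine gap there.
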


By the computation of \cite{Hs1} we have
\begin{equation}\label{w-eqn}
w_{ss}=\frac{1-2m}{1-m}\cdot\frac{w_s^2}{w}
-\frac{n-2-(n+2)m}{1-m}w_s+\frac{\beta}{n-1}\left(\frac{2(n-1)(n-2-nm)}{(1-m)\beta}-w_s\right)w\quad\mbox{ in }\R.
\end{equation}
Let
\begin{equation}\label{h-defn10}
h(s)=w(s)-\frac{2(n-1)(n-2-nm)}{(1-m)\beta}s.
\end{equation}
Then by \eqref{w-eqn},
\begin{align*}
h_{ss}=&\frac{1-2m}{1-m}\cdot\frac{w_s^2}{w}
-\frac{(n-2-(n+2)m)}{1-m}\left(\frac{2(n-1)(n-2-nm)}{(1-m)\beta} +h_s\right)\notag\\
&\qquad -\frac{\beta}{n-1}\left(\frac{2(n-1)(n-2-nm)}{(1-m)\beta}s+h\right)h_s\quad\mbox{ in }\R.
\end{align*}
Hence
\begin{equation}\label{h-eqn2}
h_{ss}+\left(\frac{2(n-2-nm)}{(1-m)}s+\frac{\beta}{n-1}h+\frac{n-2-(n+2)m}{1-m}\right)h_s=\frac{1-2m}{1-m}\cdot\frac{w_s^2}{w}-b_0\quad\mbox{ in }\R
\end{equation}
where
\begin{equation*}
b_0=\frac{2(n-1)(n-2-nm)(n-2-(n+2)m)}{(1-m)^2\beta}.
\end{equation*}

\begin{lem}\label{h-limit-lem1}
Let $n\ge 3$, $0<m<\frac{n-2}{n}$ and $m\ne\frac{n-2}{n+2}$. Then $h$ satisfies
\begin{equation}\label{h-limit}
\lim_{s\to\infty}\frac{h(s)}{\log s}=\lim_{s\to\infty}s\,h_s(s)=-\frac{(n-1)[n-2-(n+2)m]}{(1-m)\beta}.
\end{equation}
\end{lem}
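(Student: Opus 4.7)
The plan is to treat \eqref{h-eqn2} as a first-order linear ODE for $y := h_s$ and to extract the sharp asymptotic $h_s \sim L/s$ by an integrating-factor argument combined with L'H\^opital's rule, after which $h(s) \sim L\log s$ follows by integration. Rewrite \eqref{h-eqn2} as $h_{ss} + A(s)\,h_s = F(s)$ with
\begin{equation*}
A(s) = \frac{2(n-2-nm)}{1-m}\,s + \frac{\beta}{n-1}\,h(s) + \frac{n-2-(n+2)m}{1-m}, \qquad F(s) = \frac{1-2m}{1-m}\cdot\frac{w_s^2}{w} - b_0.
\end{equation*}

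First I would use Theorem \ref{w's-bd-thm} to pin down the large-$s$ asymptotics of the coefficients. Setting $c_1 := \frac{2(n-1)(n-2-nm)}{(1-m)\beta}$, Theorem \ref{w's-bd-thm} gives $w_s(s) \to c_1$ and $w(s)/s \to c_1$, hence $h(s) = o(s)$, $h_s(s) \to 0$, and $w_s^2/w = O(1/s) \to 0$. Writing $c := \frac{2(n-2-nm)}{1-m} > 0$, this yields
\begin{equation*}
A(s) = c\,s\,(1+o(1)) \to \infty \qquad \text{and} \qquad F(s) \to -b_0.
\end{equation*}

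Next, choose $s_0$ so that $A > 0$ on $[s_0,\infty)$ and set $\mu(s) := \exp\bigl(\int_{s_0}^s A(\tau)\,d\tau\bigr)$, which grows super-polynomially. Integrating $(h_s\,\mu)_s = F\,\mu$ and multiplying by $s$ gives
\begin{equation*}
s\,h_s(s) = \frac{s\bigl[h_s(s_0)\mu(s_0) + \int_{s_0}^s F(\tau)\mu(\tau)\,d\tau\bigr]}{\mu(s)}.
\end{equation*}
The boundary term disappears by super-polynomial growth of $\mu$. Applying L'H\^opital (denominator $\mu\to\infty$) splits the limit as
\begin{equation*}
\lim_{s\to\infty} s\,h_s(s) = \lim_{s\to\infty} \frac{\int_{s_0}^s F\mu\,d\tau}{A(s)\,\mu(s)} + \lim_{s\to\infty}\frac{s\,F(s)}{A(s)}.
\end{equation*}
A second L'H\^opital on the first term gives $0$ (since $(A\mu)_s = (A^2+A')\mu \sim c^2 s^2\,\mu$ dominates $F\mu$), while the second term equals $-b_0/c = -\frac{(n-1)[n-2-(n+2)m]}{(1-m)\beta} =: L$ by direct computation. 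This proves $s\,h_s(s)\to L$; integrating $h_s(\tau) = L/\tau + o(1/\tau)$ from $s_0$ to $s$ then gives $h(s)/\log s \to L$ as well.

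The main technical point is the L'H\^opital bookkeeping, in particular the super-polynomial growth of $\mu$, which is what forces the boundary term and the first of the two L'H\^opital limits to vanish. This growth is guaranteed by $A(s)\to\infty$ with $A(s) > 0$ for large $s$, both resting on $m < (n-2)/n$ via Theorem \ref{w's-bd-thm}. The hypothesis $m \ne (n-2)/(n+2)$ guarantees $b_0 \ne 0$ and hence $L \ne 0$, so that the leading behaviour $h \sim L\log s$ is genuinely of logarithmic order as claimed.
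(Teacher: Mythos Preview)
Your proof is correct and follows essentially the same approach as the paper: both use the integrating factor $\mu(s)=\exp\bigl(\int A\bigr)$ for \eqref{h-eqn2} and then extract $\lim_{s\to\infty}s\,h_s(s)$ via L'H\^opital. The only cosmetic difference is that the paper first sandwiches the right-hand side between $-b_0\pm\varepsilon$ before integrating (so that the L'H\^opital step reduces to computing $\lim s\int f/f$) and then lets $\varepsilon\to0$, whereas you keep $F$ intact and apply L'H\^opital twice; both routes give the same limit $-b_0(1-m)/[2(n-2-nm)]$.
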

\begin{proof}
We first observe that by Theorem \ref{w's-bd-thm},
\begin{equation}\label{w-ratio-0}
\lim_{s\to\infty}\frac{w_s^2(s)}{w(s)}=\frac{\lim_{s\to\infty}w_s^2(s)}{\lim_{s\to\infty}s\cdot (w(s)/s)}=0.
\end{equation}
Then by \eqref{h-eqn2} and \eqref{w-ratio-0} for any $0<\3<|b_0|/2$ there exists a constant $s_1\in\R$ such that
\begin{equation}\label{h-ineqn}
-b_0-\3\le h_{ss}+\left(\frac{2(n-2-nm)}{(1-m)}s+\frac{\beta}{n-1}h+\frac{n-2-(n+2)m}{1-m}\right)h_s\le -b_0+\3\quad\forall s\ge s_1.
\end{equation}
Let 
\begin{equation}\label{f-defn}
f(s)=\mbox{exp}\,\left(\frac{(n-2-nm)}{(1-m)}s^2+\frac{\beta}{n-1}\int_{s_1}^sh(z)\,dz+\frac{n-2-(n+2)m}{1-m}s\right).
\end{equation}
By \eqref{h-ineqn},
\begin{align}\label{h'-ineqn}
&(-b_0-\3)f(s)\le (f(s)h_s(s))_s\le (-b_0+\3)f(s)\quad\forall s\ge s_1\notag\\
\Rightarrow\quad&\frac{f(s_1)h_s(s_1)s+(-b_0-\3)s\int_{s_1}^sf(z)\,dz}{f(s)}\le sh_s(s)\le\frac{f(s_1)h_s(s_1)s+(-b_0+\3)s\int_{s_1}^sf(z)\,dz}{f(s)}\,\,\forall s\ge s_1.
\end{align}
By \eqref{w-decay-infty}, $h_s(s)\to 0$ as $s\to\infty$. Hence $h(s)=o(s)$ and $h(s)/s\to 0$ as $s\to\infty$. Then by \eqref{f-defn}
$f(s)\to\infty$ as $s\to\infty$. Hence by the l'Hospital rule,
\begin{equation}\label{f-expression-ratio-limit}
\lim_{s\to\infty}\frac{\int_{s_1}^sf(z)\,dz}{f(s)}=\lim_{s\to\infty}\frac{f(s)}{f(s)\left(\frac{2(n-2-nm)}{(1-m)}s+\frac{\beta}{n-1}h(s)+\frac{n-2-(n+2)m}{1-m}\right)}=0
\end{equation}
and
\begin{equation}\label{ratio-limit}
\lim_{s\to\infty}\frac{s}{f(s)}=\lim_{s\to\infty}\frac{1}{f(s)\left(\frac{2(n-2-nm)}{(1-m)}s+\frac{\beta}{n-1}h(s)+\frac{n-2-(n+2)m}{1-m}\right)}=0.
\end{equation}
Hence by \eqref{f-expression-ratio-limit} and the l'Hospital rule,
\begin{align}\label{ratio-limit2}
\lim_{s\to\infty}\frac{s\int_{s_1}^sf(z)\,dz}{f(s)}
=&\lim_{s\to\infty}\frac{sf(s)+\int_{s_1}^sf(z)\,dz}{f(s)\left(\frac{2(n-2-nm)}{(1-m)}s+\frac{\beta}{n-1}h(s)+\frac{n-2-(n+2)m}{1-m}\right)}\notag\\
=&\frac{(1-m)}{2(n-2-nm)}.
\end{align}
Letting first $s\to\infty$ and then $\3\to 0$ in \eqref{h'-ineqn}, by \eqref{ratio-limit} and  \eqref{ratio-limit2} we have
\begin{equation}\label{h'-limit10}
\lim_{s\to\infty}s\, h_s(s)=-\frac{(1-m)b_0}{2(n-2-nm)}=-\frac{(n-1)[n-2-(n+2)m]}{(1-m)\beta}.
\end{equation}
Hence 
\begin{equation}\label{h-ineqn10}
h(s)\le -\frac{(n-1)[n-2-(n+2)m]}{2(1-m)\beta}\log s\quad\mbox{ as }s\to\infty
\end{equation}
and \eqref{h-limit} follows from  \eqref{h'-limit10}, \eqref{h-ineqn10}   and the l'Hospital rule.
\end{proof} 

\begin{cor}
Let $n\ge 3$, $0<m<\frac{n-2}{n}$ and $m\ne\frac{n-2}{n+2}$. Then there exists a constant $s_0\in\R$ such that
\begin{equation*}
\left\{\begin{aligned}
&h_s(s)<0\quad\forall s\ge s_0\qquad\mbox{ if }\,\,0<m<\frac{n-2}{n+2}\\
&h_s(s)>0\quad\forall s\ge s_0\qquad\mbox{ if }\,\,\frac{n-2}{n+2}<m<\frac{n-2}{n}.
\end{aligned}\right.
\end{equation*}
\end{cor}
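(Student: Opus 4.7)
The plan is to read off the sign of $h_s(s)$ for large $s$ directly from Lemma \ref{h-limit-lem1}. That lemma gives
\[
\lim_{s\to\infty} s\,h_s(s) = -\frac{(n-1)[n-2-(n+2)m]}{(1-m)\beta},
\]
and the right-hand side is a nonzero real number whenever $m\ne\frac{n-2}{n+2}$. Since $(n-1)/((1-m)\beta)>0$, the sign of this limit is the opposite of the sign of $n-2-(n+2)m$.

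First I would note that when $0<m<\frac{n-2}{n+2}$ we have $n-2-(n+2)m>0$, so the limit is a strictly negative real number; and when $\frac{n-2}{n+2}<m<\frac{n-2}{n}$ we have $n-2-(n+2)m<0$, so the limit is strictly positive. In either case, since the limit of $s\,h_s(s)$ is nonzero, there exists $s_0\in\R$ such that $s\,h_s(s)$ has the sign of its limit for all $s\ge s_0$. Because $s>0$ for such $s$ (after enlarging $s_0$ if needed so that $s_0>0$), the function $h_s(s)$ itself has the same sign as the limit for all $s\ge s_0$, which yields exactly the two cases in the statement.

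There is essentially no obstacle here: the entire content of the corollary is the observation that the critical quantity $n-2-(n+2)m$ changes sign at the Yamabe exponent $m=\frac{n-2}{n+2}$, and Lemma \ref{h-limit-lem1} has already done the work of identifying this as the leading coefficient of the asymptotic of $s\,h_s(s)$. I would therefore keep the proof to a few lines, just quoting \eqref{h-limit} and discussing the two sign regimes for $n-2-(n+2)m$.
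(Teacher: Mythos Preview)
Your argument is correct and matches the paper's intent: the corollary is stated without proof immediately after Lemma \ref{h-limit-lem1}, and the only content is precisely the sign analysis of $-\frac{(n-1)[n-2-(n+2)m]}{(1-m)\beta}$ that you carry out.
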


Let 
\begin{equation}\label{h1-defn}
h_1(s)=h(s)+\frac{(n-1)[n-2-(n+2)m]}{(1-m)\beta}\log s.
\end{equation}
Then by Lemma \ref{h-limit-lem1}, $h_1(s)=o(\log s)$ as $s\to\infty$. By \eqref{h-eqn2} $h_1$ satisfies
\begin{align}\label{h1-eqn2}
&h_{1,ss}+\left(\frac{2(n-2-nm)}{(1-m)}s+\frac{\beta}{n-1}h+\frac{n-2-(n+2)m}{1-m}\right)h_{1,s}\notag\\
=&\frac{1-2m}{1-m}\cdot\frac{w_s^2}{w}+a_2\left[-\frac{1}{s^2}+\left(\frac{\beta}{n-1}h+\frac{n-2-(n+2)m}{1-m}\right)\frac{1}{s}\right]\quad\mbox{ in }\R
\end{align}
where
\begin{equation*}
a_2=\frac{(n-1)[n-2-(n+2)m]}{(1-m)\beta}.
\end{equation*}

\begin{lem}
Let $0<m<\frac{n-2}{n}$, $m\ne\frac{n-2}{n+2}$ , $\lambda>0$ and $\beta>0$. Then 
\begin{equation}\label{h1'-limit}
\lim_{s\to\infty}\frac{s^2h_{1,s}(s)}{\log s}=-\frac{(n-1)(n-2-(n+2)m)^2}{2(n-2-nm)(1-m)\beta}.
\end{equation}
\end{lem}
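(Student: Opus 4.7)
The plan is to mimic the integrating-factor / l'Hospital strategy used in the proof of Lemma \ref{h-limit-lem1}. The equation \eqref{h1-eqn2} for $h_1$ is a linear first-order ODE in $h_{1,s}$ with exactly the same coefficient in front of $h_{1,s}$ as in \eqref{h-eqn2}, so the integrating factor $f(s)$ from \eqref{f-defn} can be reused. The real work is to identify the precise leading-order behaviour of the right-hand side of \eqref{h1-eqn2} as $s\to\infty$, and then to feed it through the asymptotic machinery.

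First I would extract the leading behaviour of the RHS of \eqref{h1-eqn2}. From Theorem \ref{w's-bd-thm}, $w_s(s) \to c_0 := \frac{2(n-1)(n-2-nm)}{(1-m)\beta}$ and $w(s)/s \to c_0$, so $\frac{1-2m}{1-m}\cdot\frac{w_s^2}{w} = O(1/s)$. Similarly, $-a_2/s^2$ and $a_2\cdot\frac{n-2-(n+2)m}{(1-m)s}$ are $O(1/s)$. By Lemma \ref{h-limit-lem1}, however, $h(s)\sim -a_2\log s$, so
$$a_2\cdot\frac{\beta}{n-1}\cdot\frac{h(s)}{s}\ \sim\ -\frac{a_2^2\beta}{n-1}\cdot\frac{\log s}{s}\ =\ -\frac{(n-1)(n-2-(n+2)m)^2}{(1-m)^2\beta}\cdot\frac{\log s}{s},$$
and this is the unique dominant term. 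Call this asymptotic right-hand side $Q(s)$, so the full RHS equals $Q(s)(1+o(1))$.

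Next I would rewrite \eqref{h1-eqn2} as $(f(s)h_{1,s}(s))_s = f(s)\cdot\mathrm{RHS}(s)$, integrate from $s_1$ to $s$, and divide by $f(s)$ to obtain
$$h_{1,s}(s)\ =\ \frac{f(s_1)h_{1,s}(s_1)}{f(s)} \;+\; \frac{1}{f(s)}\int_{s_1}^s f(z)\,\mathrm{RHS}(z)\,dz.$$
Multiplying by $s^2/\log s$ and using \eqref{ratio-limit} (with $s$ replaced by $s^2/\log s$, which still grows only polynomially while $f$ grows super-exponentially), the first term tends to zero. For the second term I would apply l'Hospital exactly as in \eqref{f-expression-ratio-limit}--\eqref{ratio-limit2}: the derivative of $f$ contributes the factor $P(s):=\frac{2(n-2-nm)}{1-m}s+\frac{\beta}{n-1}h(s)+\frac{n-2-(n+2)m}{1-m}\sim \frac{2(n-2-nm)}{1-m}s$, so the limit reduces, after a single l'Hospital (together with a second one to discard the residual $\int f\,\mathrm{RHS}\,dz$ piece, in parallel with \eqref{ratio-limit2}), to
$$\lim_{s\to\infty}\frac{s^2\,Q(s)}{P(s)\log s}\ =\ \frac{-\tfrac{(n-1)(n-2-(n+2)m)^2}{(1-m)^2\beta}}{\tfrac{2(n-2-nm)}{1-m}}\ =\ -\frac{(n-1)(n-2-(n+2)m)^2}{2(n-2-nm)(1-m)\beta}.$$

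The main obstacle is the bookkeeping in the l'Hospital step. One has to show that all subdominant contributions in $\mathrm{RHS}(s)$ (namely the $w_s^2/w$ term, the $-a_2/s^2$ term, and the $a_2\frac{n-2-(n+2)m}{(1-m)s}$ term, each only $O(1/s)$) contribute $o(\log s / s^2)$ to $h_{1,s}(s)$; this is equivalent to showing $\frac{s^2\int_{s_1}^s f\cdot O(1/z)\,dz}{f(s)\log s}\to 0$, which should follow from the same iterated l'Hospital scheme since $1/P(s)=O(1/s)$. A minor but real check is that the sign conventions work out uniformly for both $0<m<\frac{n-2}{n+2}$ and $\frac{n-2}{n+2}<m<\frac{n-2}{n}$: in both ranges $a_2^2>0$, so the dominant $\log s/s$ coefficient in $Q$ is negative, which is consistent with the claimed sign on the right-hand side of \eqref{h1'-limit}.
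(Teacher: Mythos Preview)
Your proposal is correct and follows essentially the same route as the paper: identify the leading behaviour of the right-hand side of \eqref{h1-eqn2} as $-a_3\frac{\log s}{s}$ with $a_3=\frac{(n-1)(n-2-(n+2)m)^2}{(1-m)^2\beta}$ (coming from the $\frac{\beta}{n-1}h(s)/s$ term via Lemma \ref{h-limit-lem1}), reuse the integrating factor $f$ from \eqref{f-defn}, and extract the limit by l'Hospital.

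The only organisational difference is that the paper packages all four terms of the right-hand side into a single function $H(s)$, proves $\lim_{s\to\infty}\frac{sH(s)}{\log s}=-a_3$ in one line, and then runs a clean $\varepsilon$-sandwich $(-a_3-\varepsilon)\frac{\log s}{s}\le H(s)\le (-a_3+\varepsilon)\frac{\log s}{s}$; this makes your separate treatment of the $O(1/s)$ ``subdominant'' pieces unnecessary, since $O(1/s)=o(\log s/s)$ is already absorbed into the sandwich. The key l'Hospital computation in the paper is then precisely $\lim_{s\to\infty}\frac{s^2\int_{s_1}^s\frac{\log z}{z}f(z)\,dz}{f(s)\log s}=\frac{1-m}{2(n-2-nm)}$, together with the auxiliary vanishing $\lim_{s\to\infty}\frac{\int_{s_1}^s\frac{\log z}{z}f(z)\,dz}{f(s)\log s}=0$ that you correctly flag as needed to kill the residual integral term after differentiating.
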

\begin{proof}
Let 
\begin{equation*}
H(s)=\frac{1-2m}{1-m}\cdot\frac{w_s^2}{w}+a_2\left[-\frac{1}{s^2}+\left(\frac{\beta}{n-1}h+\frac{n-2-(n+2)m}{1-m}\right)\frac{1}{s}\right].
\end{equation*}
Then by  Theorem \ref{w's-bd-thm} and Lemma \ref{h-limit-lem1},
\begin{equation}\label{H-infty-limit}
\lim_{s\to\infty}\frac{sH(s)}{\log s}=-\frac{(n-2-(n+2)m)}{1-m}a_2=-a_3
\end{equation}
where
\begin{equation}
a_3=\frac{(n-1)(n-2-(n+2)m)^2}{(1-m)^2\beta}.
\end{equation}
By \eqref{h1-eqn2} and \eqref{H-infty-limit} for any $0<\3<a_3/2$ there exists a constant $s_1>1$ such that
\begin{align}\label{h1-ineqn}
& (-a_3-\3)\frac{\log s}{s}\le h_{1,ss}+\left(\frac{2(n-2-nm)}{(1-m)}s+\frac{\beta}{n-1}h+\frac{n-2-(n+2)m}{1-m}\right)h_{1,s}\le (-a_3+\3)\frac{\log s}{s}
\end{align}
holds for all $s\ge s_1$. Let $f$ be given by \eqref{f-defn}. Multiplying \eqref{h1-ineqn} by $f$ and integrating over $(s_1,s)$,
\begin{align}\label{h1'-ineqn3}
\frac{s^2}{\log s}\left(\frac{f(s_1)h_{1,s}(s_1)+(-a_3-\3)\int_{s_1}^s\frac{\log z}{z}f(z)\,dz}{f(s)}\right)\le& \frac{s^2h_{1,s}(s)}{\log s}\notag\\
\le&
\frac{s^2}{\log s}\left(\frac{f(s_1)h_{1,s}(s_1)+(-a_3+\3)\int_{s_1}^s\frac{\log z}{z}f(z)\,dz}{f(s)}\right)
\end{align}
holds for all $s\ge s_1$.
By the l'Hospital rule and Lemma \ref{h-limit-lem1},
\begin{align}\label{f-expression-ratio-limit2}
\lim_{s\to\infty}\frac{s^2\int_{s_1}^s\frac{\log z}{z}f(z)\,dz}{ f(s)\log s}
=&\lim_{s\to\infty}\frac{f(s)s\log s+2s\int_{s_1}^s\frac{\log z}{z}f(z)\,dz}{f(s)\left(\frac{2(n-2-nm)}{(1-m)}s+\frac{\beta}{n-1}h(s)+\frac{n-2-(n+2)m}{1-m}\right)\log s+\frac{f(s)}{s}}\notag\\
=&\frac{1-m}{2(n-2-nm)}+\frac{1-m}{n-2-nm}\lim_{s\to\infty}\frac{\int_{s_1}^s\frac{\log z}{z}f(z)\,dz}{f(s)\log s}
\end{align}
Since 
\begin{equation*}
\left|\frac{\int_{s_1}^s\frac{\log z}{z}f(z)\,dz}{f(s)\log s}\right|\le\frac{\int_{s_1}^sf(z)\,dz}{f(s)}\quad\forall s\ge s_1,
\end{equation*}
by \eqref{f-expression-ratio-limit},
\begin{equation}\label{f-expression-ratio-limit3}
\lim_{s\to\infty}\frac{\int_{s_1}^s\frac{\log z}{z}f(z)\,dz}{f(s)\log s}=0.
\end{equation}
By \eqref{f-expression-ratio-limit2} and \eqref{f-expression-ratio-limit3},
\begin{equation}\label{f-expression-ratio-limit4}
\lim_{s\to\infty}\frac{s^2\int_{s_1}^s\frac{\log z}{z}f(z)\,dz}{ f(s)\log s}=\frac{1-m}{2(n-2-nm)}.
\end{equation}
Letting first $s\to\infty$ and then $\3\to 0$ in \eqref{h1'-ineqn3}, by \eqref{f-expression-ratio-limit4} we get \eqref{h1'-limit}
and the lemma follows.
\end{proof}

\begin{cor}\label{h1-expansion-cor}
Let $n\ge 3$, $0<m<\frac{n-2}{n}$, $m\ne\frac{n-2}{n+2}$, $\lambda>0$ and $\beta>0$. Then 
\begin{equation}\label{K-defn}
K(\lambda,\beta):=\lim_{s\to\infty}h_1(s)\in\R\quad\mbox{ exists}
\end{equation} 
and
\begin{equation}\label{h1-expression}
h_1(s)=K(\lambda,\beta)+\frac{(n-1)(n-2-(n+2)m)^2}{2(n-2-nm)(1-m)\beta}\left(\frac{1+\log s}{s}\right)+o\left(\frac{1+\log s}{s}\right)\quad\mbox{ as }s\to\infty.
\end{equation}
\end{cor}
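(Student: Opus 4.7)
The plan is to integrate the asymptotic formula for $h_{1,s}$ supplied by the preceding lemma. Setting
\[
C := \frac{(n-1)(n-2-(n+2)m)^2}{2(n-2-nm)(1-m)\beta},
\]
that lemma says $h_{1,s}(s) = -C\,\dfrac{\log s}{s^2} + o\!\left(\dfrac{\log s}{s^2}\right)$ as $s\to\infty$; note $C>0$ since $m\ne\frac{n-2}{n+2}$ and $0<m<\frac{n-2}{n}$. Because the improper integral $\int^{\infty}\frac{\log z}{z^2}\,dz$ converges, $h_{1,s}$ is absolutely integrable on $[s_1,\infty)$ for any sufficiently large $s_1$, and hence $\int_{s_1}^{\infty} h_{1,s}(z)\,dz$ exists. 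This immediately yields the existence of
\[
K(\lambda,\beta) := \lim_{s\to\infty} h_1(s) \in \R,
\]
which is \eqref{K-defn}.

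For the quantitative expansion \eqref{h1-expression}, the key elementary computation is the identity
\[
\int_s^{\infty}\frac{\log z}{z^2}\,dz = \frac{1+\log s}{s},
\]
obtained via integration by parts from $\int \frac{\log z}{z^2}\,dz = -\frac{1+\log z}{z}$. For $s$ large I would write
\[
h_1(s) - K(\lambda,\beta) = -\int_s^{\infty} h_{1,s}(z)\,dz,
\]
and feed in a quantitative version of the previous lemma: given $\3>0$, choose $s_1=s_1(\3)$ so that
\[
\left|h_{1,s}(z) + C\,\frac{\log z}{z^2}\right| \le \3\,\frac{\log z}{z^2} \qquad \forall z \ge s_1.
\]
Integrating this bound from $s$ to $\infty$ and applying the identity above on both sides then gives
\[
\left|h_1(s) - K(\lambda,\beta) - C\,\frac{1+\log s}{s}\right| \le \3\,\frac{1+\log s}{s} \qquad \forall s \ge s_1,
\]
which, since $\3>0$ is arbitrary, is exactly \eqref{h1-expression}.

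I do not anticipate any serious obstacle here. The substantive input is the preceding lemma, which is already proved; the rest is routine integration of an asymptotic relation. The one minor technical point is passing an $o(\cdot)$ estimate under an improper integral, and this is handled by the standard $\3$–$s_1$ formulation together with the integrability of $\frac{\log z}{z^2}$ near infinity, as indicated above.
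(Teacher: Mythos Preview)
Your proof is correct and follows essentially the same approach as the paper: both integrate the asymptotic for $h_{1,s}$ from the preceding lemma, using the identity $\int_s^{\infty}\frac{\log z}{z^2}\,dz=\frac{1+\log s}{s}$, via the same $\varepsilon$--$s_1$ formulation. The only minor difference is in establishing \eqref{K-defn}: the paper first bounds $\int_{s_1}^s|h_{1,s}|$ to get $h_1$ bounded and then invokes eventual monotonicity ($h_{1,s}<0$ for large $s$), whereas you go directly from absolute integrability of $h_{1,s}$ to convergence of $h_1(s)$, which is slightly more efficient and makes the monotonicity observation unnecessary.
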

\begin{proof}
By \eqref{h1'-limit} there exist constants $C_1>0$ and $s_1>1$ such that
\begin{equation}
\left|\frac{s^2h_{1,s}(s)}{\log s}\right|\le C_1\quad\forall s\ge s_1.
\end{equation} 
Hence
\begin{align}\label{h1-uniform-bd}
&|h_1(s)-h_1(s_1)|\le\int_{s_1}^s|h_{1,s}(z)|\,dz\le C_1\int_{s_1}^s\frac{\log z}{z^2}\,dz\le C_2\quad\forall s\ge s_1\notag\\
\Rightarrow\quad&|h_1(s)|\le C_2+|h_1(s_1)|\quad\forall s\ge s_1
\end{align}
for some constant $C_2>0$. On the other hand by \eqref{h1'-limit} there exists a constant $s_0>s_1$ such that
\begin{equation*}
h_{1,s}(s)<0\quad\forall s\ge s_0.
\end{equation*}
Then $h_1(s)$ is  monotone decreasing in $(s_0,\infty)$. This together with \eqref{h1-uniform-bd} implies that \eqref{K-defn} holds.
By \eqref{h1'-limit} for any 
$0<\3<\frac{(n-1)(n-2-(n+2)m)^2}{4(n-2-nm)(1-m)\beta}$ there exists a constant $s_2>1$ such that
\begin{equation}\label{h1'-ineqn6}
\left(-\frac{(n-1)(n-2-(n+2)m)^2}{2(n-2-nm)(1-m)\beta}-\3\right)\frac{\log s}{s^2}\le h_{1,s}(s)\le \left(-\frac{(n-1)(n-2-(n+2)m)^2}{2(n-2-nm)(1-m)\beta}+\3\right)\frac{\log s}{s^2}
\end{equation}
holds for all $s\ge s_2$.
Integrating \eqref{h1'-ineqn6} over $(s,\infty)$, $s\ge s_2$,
\begin{align*}
-\left(\frac{(n-1)(n-2-(n+2)m)^2}{2(n-2-nm)(1-m)\beta}+\3\right)\frac{(1+\log s)}{s}
\le &K(\lambda,\beta)-h_1(s)\\
\le&-\left(\frac{(n-1)(n-2-(n+2)m)^2}{2(n-2-nm)(1-m)\beta}-\3\right)\frac{(1+\log s)}{s}
\end{align*}
for all $s\ge s_2$ and \eqref{h1-expression} follows.
\end{proof}

Let $K(\lambda,\beta)$ be given by \eqref{K-defn} and
\begin{equation}\label{h2-defn}
h_2(s)=h_1(s)-K(\lambda,\beta)-\frac{(n-1)(n-2-(n+2)m)^2}{2(n-2-nm)(1-m)\beta}\left(\frac{1+\log s}{s}\right).
\end{equation}
Then 
\begin{equation}\label{h2-infty}
h_2(s)=o\left(\frac{1+\log s}{s}\right)\mbox{ as }s\to\infty
\end{equation}
 and by \eqref{h1-defn} and \eqref{h1-eqn2},
\begin{align}\label{h2-eqn}
&h_{2,ss}+\left(\frac{2(n-2-nm)}{(1-m)}s+\frac{\beta}{n-1}h+\frac{n-2-(n+2)m}{1-m}\right)h_{2,s}\notag\\
=&\frac{1-2m}{1-m}\cdot\frac{w_s^2}{w}+\frac{(n-1)(n-2-(n+2)m)^2}{(1-m)^2\beta}\cdot\frac{1}{s}+\frac{(n-2-(n+2)m)}{1-m}\cdot\frac{h_1(s)}{s}-\frac{a_2}{s^2}\notag\\
&\qquad +\frac{(n-1)(n-2-(n+2)m)^2}{2(1-m)(n-2-nm)\beta}\cdot\left[\frac{(1-2\log s)}{s^3}+\left(\frac{\beta}{n-1}h(s)+\frac{n-2-(n+2)m}{1-m}\right)\cdot\frac{\log s}{s^2}\right]\notag\\
=&:H_1(s).
\end{align}

\begin{lem}\label{h2-limit-lem}
Let $n\ge 3$, $0<m<\frac{n-2}{n}$, $m\ne\frac{n-2}{n+2}$, $\lambda>0$ and $\beta>0$. Then $h_2$ satisfies
\begin{equation}\label{h2-limit}
\lim_{s\to\infty}s^2\,h_{2,s}(s)=\frac{(1-m)a_1(\lambda,\beta)}{2(n-2-nm)\beta}
\end{equation}
where $a_1(\lambda,\beta)$ is given by \eqref{a1-defn} with $K(\lambda,\beta)$ given by \eqref{K-defn}.
\end{lem}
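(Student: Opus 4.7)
The plan is to imitate the proof of the preceding $h_1$-limit lemma: first pin down the $1/s$-coefficient of the forcing term $H_1(s)$ appearing on the right-hand side of the ODE \eqref{h2-eqn}, then multiply through by the integrating factor $f$ of \eqref{f-defn}, integrate once, and apply l'Hospital's rule to pass to the limit.

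The first step is to show
\[
\lim_{s\to\infty} s\,H_1(s) = \frac{a_1(\lambda,\beta)}{\beta}.
\]
I would check the four summands of $H_1$ separately. By Theorem \ref{w's-bd-thm}, $w_s^2/w \sim \frac{2(n-1)(n-2-nm)}{(1-m)\beta}\cdot\frac{1}{s}$, so the first summand $\frac{1-2m}{1-m}\cdot\frac{w_s^2}{w}$ contributes $\frac{2(1-2m)(n-1)(n-2-nm)}{(1-m)^2\beta}$ after multiplication by $s$; the explicit $1/s$ term contributes $\frac{(n-1)(n-2-(n+2)m)^2}{(1-m)^2\beta}$; by Corollary \ref{h1-expansion-cor}, $h_1(s)\to K(\lambda,\beta)$, so $\frac{n-2-(n+2)m}{1-m}\cdot h_1(s)/s$ contributes $\frac{n-2-(n+2)m}{1-m}K(\lambda,\beta)$; finally the remaining $-a_2/s^2$ term and the bracketed $O(\log s/s^3)$ and $O(\log s/s^2)$ terms are all negligible after multiplication by $s$ (using Lemma \ref{h-limit-lem1} to control $h(s)\cdot\log s/s^2$). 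Adding and dividing by $\beta$ reproduces the formula for $a_1(\lambda,\beta)$ in \eqref{a1-defn}.

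Fixing $0<\varepsilon$ small, the first step gives some $s_2>1$ with $(a_1(\lambda,\beta)/\beta-\varepsilon)/s\le H_1(s)\le(a_1(\lambda,\beta)/\beta+\varepsilon)/s$ for $s\ge s_2$. Since $f$ is an integrating factor for the left-hand side of \eqref{h2-eqn}, multiplying \eqref{h2-eqn} by $f$ and integrating on $[s_2,s]$ produces upper and lower bounds on $f(s)h_{2,s}(s)$ in terms of $\int_{s_2}^s f(z)/z\,dz$ plus a constant. Multiplying by $s^2/f(s)$ sandwiches $s^2 h_{2,s}(s)$. The term coming from the constant of integration is killed by $s^2/f(s)\to 0$, which follows from the same l'Hospital argument that yields \eqref{ratio-limit}. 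For the main term I would verify
\[
\lim_{s\to\infty}\frac{s^2\int_{s_2}^s f(z)/z\,dz}{f(s)}=\frac{1-m}{2(n-2-nm)}
\]
by one application of l'Hospital, using $f'/f\sim \frac{2(n-2-nm)}{1-m}s$: the cross term $2s\int_{s_2}^s f(z)/z\,dz/f(s)$ produced when differentiating $s^2$ against the integral decays to zero by a second l'Hospital step analogous to \eqref{f-expression-ratio-limit3}, while the remaining piece $s\cdot f(s)\cdot s/\bigl(f(s)\cdot\frac{2(n-2-nm)}{1-m}s\bigr)$ converges to $\frac{1-m}{2(n-2-nm)}$. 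Letting $\varepsilon\to 0$ then gives \eqref{h2-limit}.

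The main obstacle is the careful bookkeeping in the first step: one has to expand each summand of $H_1$ to order $1/s$ and confirm that the subleading corrections, especially the $\log s/s^2$ term multiplied by $h(s)$ (which grows like $\log s$ by Lemma \ref{h-limit-lem1}), really contribute nothing to $\lim s\,H_1(s)$. Once this is pinned down, the integrating-factor/l'Hospital machinery is a direct adaptation of the arguments already carried out for $h_s$ and $h_{1,s}$, so no new analytic idea is required.
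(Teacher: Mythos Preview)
Your proposal is correct and follows essentially the same route as the paper: compute $\lim_{s\to\infty} sH_1(s)=a_1(\lambda,\beta)/\beta$ from Theorem~\ref{w's-bd-thm}, Lemma~\ref{h-limit-lem1} and \eqref{K-defn}, then use the integrating factor $f$ and l'Hospital exactly as in the proofs of the earlier lemmas. (One small slip: in your ``remaining piece'' the numerator should be $s\cdot f(s)$, not $s\cdot f(s)\cdot s$; with that correction the limit $\frac{1-m}{2(n-2-nm)}$ comes out as you claim.)
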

\begin{proof}
By Theorem \ref{w's-bd-thm}, Lemma \ref{h-limit-lem1} and \eqref{K-defn},
\begin{equation}\label{H1-decay-rate}
\lim_{s\to\infty}sH_1(s)=\frac{a_1(\lambda,\beta)}{\beta}
\end{equation}
where $a_1(\lambda,\beta)$ is given by \eqref{a1-defn} with $K(\lambda,\beta)$ given by \eqref{K-defn}.
Then by \eqref{h2-eqn} and \eqref{H1-decay-rate} for any $0<\3<1$ there exists a constant $s_1>1$ such that
\begin{equation}\label{h2-ineqn20}
\left(\frac{a_1(\lambda,\beta)}{\beta}-\3\right)\frac{1}{s}\le h_{2,ss}+\left(\frac{2(n-2-nm)}{(1-m)}s+\frac{\beta}{n-1}h+\frac{n-2-(n+2)m}{1-m}\right)h_{2,s}\le\left(\frac{a_1(\lambda,\beta)}{\beta}+\3\right)\frac{1}{s} 
\end{equation}
holds for all $s\ge s_1$.
Let $f$ be given by \eqref{f-defn}. Multiplying \eqref{h2-ineqn20} by $f$ and integrating over $(s_1,s)$,
\begin{equation}\label{h-ineqn32}
\frac{f(s_1)h_2(s_1)s^2+\left(\frac{a_1(\lambda,\beta)}{\beta}-\3\right)s^2\int_{s_1}^s\frac{f(z)}{z}\,dz}{f(s)}\le s^2h_{2,s}(s)\le \frac{f(s_1)h_2(s_1)s^2+\left(\frac{a_1(\lambda,\beta)}{\beta}+\3\right)s^2\int_{s_1}^s\frac{f(z)}{z}\,dz}{f(s)}
\end{equation}
holds for all $s\ge s_1$.
Since by the l'Hospital rule,
\begin{equation*}
\lim_{s\to\infty}\frac{\int_{s_1}^s\frac{f(z)}{z}\,dz}{f(s)}=\lim_{s\to\infty}\frac{\frac{f(s)}{s}}{f(s)\left(\frac{2(n-2-nm)}{(1-m)}s+\frac{\beta}{n-1}h(s)+\frac{n-2-(n+2)m}{1-m}\right)}=0,
\end{equation*}
by Lemma \ref{h-limit-lem1} and the l'Hospital rule,
\begin{align}\label{f-ratio-limit}
\lim_{s\to\infty}\frac{s^2\int_{s_1}^s\frac{f(z)}{z}\,dz}{f(s)}=&\lim_{s\to\infty}\frac{sf(s)+2s\int_{s_1}^s\frac{f(z)}{z}\,dz}{f(s)\left(\frac{2(n-2-nm)}{(1-m)}s+\frac{\beta}{n-1}h(s)+\frac{n-2-(n+2)m}{1-m}\right)}\notag\\
=&\frac{(1-m)}{2(n-2-nm)}+\frac{(1-m)}{(n-2-nm)}\lim_{s\to\infty}\frac{\int_{s_1}^s\frac{f(z)}{z}\,dz}{f(s)}\notag\\
=&\frac{(1-m)}{2(n-2-nm)}.
\end{align}
Hence letting first $s\to\infty$ and then $\3\to 0$ in \eqref{h-ineqn32}, by \eqref{f-defn} and   \eqref{f-ratio-limit} we get \eqref{h2-limit} and the lemma follows.
\end{proof}

By \eqref{h2-infty} and Lemma \ref{h2-limit-lem} we have the following result.

\begin{lem}\label{h2-expansion-lem}
Let $n\ge 3$, $0<m<\frac{n-2}{n}$, $m\ne\frac{n-2}{n+2}$, $\lambda>0$ and $\beta>0$.  Then 
\begin{equation}\label{h2-expansion}
h_2(s)=-\frac{(1-m)a_1(\lambda,\beta)}{2(n-2-nm)\beta s}+\frac{o(s)}{s}\quad\mbox{ as }s\to\infty
\end{equation}
where $a_1(\lambda,\beta)$ is given by \eqref{a1-defn} with $K(\lambda,\beta)$ given by \eqref{K-defn}.
\end{lem}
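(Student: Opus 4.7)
The plan is to derive the expansion of $h_2$ by integrating the asymptotic formula for $h_{2,s}$ from Lemma \ref{h2-limit-lem} back to $h_2$, using \eqref{h2-infty} as the boundary condition at infinity. Set $C=\frac{(1-m)a_1(\lambda,\beta)}{2(n-2-nm)\beta}$ so that Lemma \ref{h2-limit-lem} reads $\lim_{s\to\infty}s^2 h_{2,s}(s)=C$, equivalently $h_{2,s}(s)=\frac{C}{s^2}+\frac{g(s)}{s^2}$ where $g(s)=o(1)$ as $s\to\infty$.

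First I would observe that \eqref{h2-infty} gives $h_2(s)=o\!\left(\frac{1+\log s}{s}\right)=o(1)$, so in particular $h_2(s)\to 0$ as $s\to\infty$. This justifies integrating $h_{2,s}$ from $s$ to $\infty$ via the fundamental theorem of calculus:
\begin{equation*}
h_2(s)=-\int_s^\infty h_{2,s}(z)\,dz,
\end{equation*}
provided the improper integral converges, which is guaranteed because $|h_{2,s}(z)|\le \frac{|C|+1}{z^2}$ for $z$ large.

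Next I would split the integral:
\begin{equation*}
h_2(s)=-C\int_s^\infty\frac{dz}{z^2}-\int_s^\infty\frac{g(z)}{z^2}\,dz=-\frac{C}{s}-\int_s^\infty\frac{g(z)}{z^2}\,dz.
\end{equation*}
For the error term, fix $\3>0$ and choose $s_\3$ so that $|g(z)|\le\3$ for all $z\ge s_\3$; then for $s\ge s_\3$,
\begin{equation*}
\left|\int_s^\infty\frac{g(z)}{z^2}\,dz\right|\le \3\int_s^\infty\frac{dz}{z^2}=\frac{\3}{s}.
\end{equation*}
Hence $\int_s^\infty\frac{g(z)}{z^2}\,dz=o(1/s)$ as $s\to\infty$, which yields \eqref{h2-expansion}.

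The step I expect to be routine but worth being careful about is the passage from the limit $\lim s^2 h_{2,s}=C$ to a uniform $\3/z^2$ bound on the error $h_{2,s}(z)-C/z^2$ on tails $[s,\infty)$; this is just the definition of the limit and the reason the remainder integrates to $o(1/s)$ rather than merely $O(1/s)$. There is no genuine obstacle here: all the analytic work was already done in Lemma \ref{h2-limit-lem} and in establishing \eqref{h2-infty}, so the present lemma is essentially the integral-form consequence of those two statements.
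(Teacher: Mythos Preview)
Your proof is correct and follows exactly the approach the paper intends: the paper gives no explicit argument for this lemma, stating only that it follows from \eqref{h2-infty} and Lemma~\ref{h2-limit-lem}, and your integration argument is precisely the routine verification behind that sentence. In fact you obtain the slightly sharper remainder $o(1/s)$, which is what the paper's notation $\frac{o(s)}{s}$ is evidently meant to convey in context.
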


We are now ready for the proof of Theorem \ref{higher-order-expansion-thm}.

\noindent{\bf Proof of Theorem \ref{higher-order-expansion-thm}}:  Let $a_0$, $a_1(\lambda,\beta)$, be given by \eqref{a0-defn} and \eqref{a1-defn} with $K(\lambda,\beta)$ given by \eqref{K-defn}. Let 
\begin{equation*}
K_0=\frac{(1-m)K(1,1)}{2(n-1)(n-2-nm)}.
\end{equation*}
By \eqref{w-defn}, \eqref{h-defn10}, \eqref{h1-defn}, \eqref{h2-defn} and \eqref{h2-expansion}, 
\begin{align}\label{v11-expansion}
v_{1,1}(r)^{1-m}=&\frac{2(n-1)(n-2-nm)}{(1-m)r^2}\left\{\log r-\frac{(n-2-(n+2)m)}{2(n-2-nm)}\log (\log r)+K_0\right.\notag\\
&\qquad+\frac{(n-2-(n+2)m)^2}{4(n-2-nm)^2}\left.\left(\frac{1+\log(\log r)}{\log r}\right)
-\frac{(1-m)^2}{4(n-1)(n-2-nm)^2}\frac{a_1(1,1)}{\log r}\right.\notag\\
&\qquad \left.+\frac{o(\log r)}{\log r}\right\}\notag\\
=&\frac{2(n-1)(n-2-nm)}{(1-m)r^2}\left\{\log r-\frac{(n-2-(n+2)m)}{2(n-2-nm)}\log (\log r)+K_0+\frac{a_0}{\log r}\right.\notag\\
&\qquad\left.+\frac{(n-2-(n+2)m)^2}{4(n-2-nm)^2}\cdot\frac{\log(\log r)}{\log r}+\frac{o(\log r)}{\log r}\right\}\quad\mbox{ as }r\to\infty.
\end{align}
Then by (2.19) of \cite{CD} and \eqref{v11-expansion},
\begin{align*}
v_{\lambda,\beta}(r)^{1-m}=&\lambda^{1-m} v_{1,1}(\lambda^{\frac{1-m}{2}}\sqrt{\beta}r)^{1-m}\notag\\
=&\frac{2(n-1)(n-2-nm)}{(1-m)\beta r^2}\left\{\log (\lambda^{\frac{1-m}{2}}\sqrt{\beta}r)-\frac{(n-2-(n+2)m)}{2(n-2-nm)}\log (\log (\lambda^{\frac{1-m}{2}}\sqrt{\beta}r))+K_0
\right.\notag\\
&\qquad+\frac{a_0}{\log (\lambda^{\frac{1-m}{2}}\sqrt{\beta}r)}
+\frac{(n-2-(n+2)m)^2}{4(n-2-nm)^2}\cdot\frac{\log(\log (\lambda^{\frac{1-m}{2}}\sqrt{\beta}r))}{\log (\lambda^{\frac{1-m}{2}}\sqrt{\beta}r)}\notag\\
&\qquad\left.+\frac{o(\log (\lambda^{\frac{1-m}{2}}\sqrt{\beta}r))}{\log (\lambda^{\frac{1-m}{2}}\sqrt{\beta}r)}\right\}\notag\\
=&\frac{2(n-1)(n-2-nm)}{(1-m)\beta r^2}\left\{\log r -\frac{(n-2-(n+2)m)}{2(n-2-nm)}\log (\log r))
+\frac{1-m}{2}\log\lambda\right.\notag\\
&\qquad \left.+\frac{1}{2}\log\beta+K_0+\frac{a_0}{\log r}
+\frac{(n-2-(n+2)m)^2}{4(n-2-nm)^2}\cdot\frac{\log(\log r)}{\log r}+\frac{o(\log r)}{\log r}\right\}
\end{align*}
as $r\to\infty$ and Theorem \ref{higher-order-expansion-thm} follows. 

{\hfill$\square$\vspace{6pt}} 

\begin{rmk}
From \eqref{v-lambda-beta-expansion1},
\begin{align*}
h_1(s)=&\frac{2(n-1)(n-2-nm)}{(1-m)\beta}\left\{
\frac{1-m}{2}\log\lambda +\frac{1}{2}\log\beta+K_0+\frac{a_0}{s}
+\frac{(n-2-(n+2)m)^2}{4(n-2-nm)^2}\cdot\frac{\log s}{s}\right.\\
&\qquad\left.+\frac{o(s)}{s}\right\}\qquad\mbox{ as }s\to\infty.
\end{align*}
Hence 
\begin{equation}\label{h1-limit2}
\lim_{s\to\infty}h_1(s)=\frac{2(n-1)(n-2-nm)}{(1-m)\beta}\left\{
\frac{1-m}{2}\log\lambda +\frac{1}{2}\log\beta+K_0\right\}.
\end{equation}
Thus by \eqref{K-defn} and \eqref{h1-limit2},
\begin{equation*}
K(\lambda,\beta)=\frac{2(n-1)(n-2-nm)}{(1-m)\beta}\left\{
\frac{1-m}{2}\log\lambda +\frac{1}{2}\log\beta+K_0\right\}.
\end{equation*}
\end{rmk}
 
\noindent{\bf Proof of Theorem \ref{convergence-thm}}: Since the proof is similar to the proof of Theorem 3.1 and Corollary 3.2 of \cite{CD} we will only sketch the proof here. Let $K_0$ be given by Theorem \ref{higher-order-expansion-thm}  and $\lambda_1=\left(e^{2(K_1-K_0)}/\beta\right)^{\frac{1}{1-m}}$. Then for any $0<\3<\lambda_1$ there exists a constant $R_{\3}>0$ such that
\begin{align}\label{initial-data-compare}
&u_{\lambda_1-\3,\beta}(x)\le \phi(x)\le u_{\lambda_1+\3,\beta}(x)\quad\forall |x|\ge R_{\3}\notag\\
\Rightarrow\quad&u_{\lambda_1-\3,\beta}(x)+\psi(x)\le u_0(x)\le u_{\lambda_1+\3,\beta}(x)+\psi(x)\quad\forall |x|\ge R_{\3}.
\end{align}
For any $\delta>0$, let $u_1$, $u_2$,  $u_{1,\delta}$ and $w_{1,\delta}$ be the solution of \eqref{cauchy-problem} with initial value 
\begin{equation*}
\min (u_{\lambda_1-\3,\beta}(x)+\psi(x), u_0(x)),\,\, \max (u_{\lambda_1+\3,\beta}(x)+\psi(x), u_0(x)),\,\, \min (u_{\lambda_1-\3,\beta}(x)+\psi(x), u_0(x))+\delta,
\end{equation*}
 and $u_{\lambda_1-\3,\beta}(x)+\delta$
respectively given by Theorem 1.1 of \cite{Hs2}. Let $\4{u}_1$, $\4{u}_2$, $\4{u}_{1,\delta}$ and  $\4{w}_{1,\delta}$ be given by \eqref{rescaled-soln} with $u$ being replaced by $u_1$, $u_2$, $u_{1,\delta}$ and $w_{1,\delta}$ respectively. By \eqref{initial-data-compare}
and the construction of solutions in \cite{Hs2},
\begin{align}
&u_1\le u\le u_2, \quad u_{1,\delta}\ge\delta, \quad w_{1,\delta}\ge\delta\quad\mbox{ in }\R^n\times (0,\infty)\quad\forall\delta>0\label{u1-u-u2-compare}\\
\Rightarrow\quad& \4{u}_1\le \4{u}\le\4{u}_2, \quad\mbox{ in }\R^n\times (0,\infty)\label{u1-u-u2-tilde-compare2}.
\end{align}
By \eqref{u1-u-u2-compare} and Lemma \ref{L1-comparison-lem},
\begin{equation}\label{u-tidle-L1-comparison}
\|\4{u}_{1,\delta}(\cdot,t)-\4{w}_{1,\delta}(\cdot,t)\|_{L^1(\R^n)}\le e^{-\frac{(n-2-nm)}{1-m}t} \|\min (u_{\lambda_1-\3,\beta}+\psi, u_0)-u_{\lambda_1-\3,\beta}\|_{L^1(\R^n)}\quad\forall t >0
\end{equation} 
Since $\min (u_{\lambda_1-\3,\beta}(x)+\psi(x), u_0(x))+\delta$ and $u_{\lambda_1-\3,\beta}(x)+\delta$ decreases monotonically to 
\begin{equation*}
\min (u_{\lambda-\3,\beta}(x)+\psi(x), u_0(x))\quad\mbox{ and } 
\quad u_{\lambda_1-\3,\beta}(x)
\end{equation*}
 as $\delta\to 0$, $u_{1,\delta}$ and $w_{1,\delta}$ decreases monotonically to $u_1$ and 
$e^{-\frac{2\beta}{1-m}t}u_{\lambda_1-\3,\beta}(e^{-\beta t}x)$ as $\delta\to 0$.
Hence letting $\delta\to 0$ in \eqref{u-tidle-L1-comparison},
\begin{equation}\label{u-tidle-L1-comparison2}
\|\4{u}_1(\cdot,t)-u_{\lambda_1-\3,\beta}\|_{L^1(\R^n)}\le e^{-\frac{(n-2-nm)}{1-m}t} \|\min (u_{\lambda_1-\3,\beta}+\psi, u_0)-u_{\lambda_1-\3,\beta}\|_{L^1(\R^n)}\quad\forall t >0
\end{equation} 
Similarly,
\begin{equation}\label{u-tidle-L1-comparison3}
\|\4{u}_2(\cdot,t)-u_{\lambda_1+\3,\beta}\|_{L^1(\R^n)}\le e^{-\frac{(n-2-nm)}{1-m}t} \|\max (u_{\lambda_1-\3,\beta}+\psi, u_0)-u_{\lambda_1+\3,\beta}\|_{L^1(\R^n)}\quad\forall t >0
\end{equation} 
By \eqref{u1-u-u2-tilde-compare2}, \eqref{u-tidle-L1-comparison2} and \eqref{u-tidle-L1-comparison3}, and an argument similar to the proof of Theorem 3.1 of \cite{CD}, the rescaled function $\4{u}(x,t)$ given by \eqref{rescaled-soln} converges to  $v_{\lambda_1,\beta}$ in $L_{loc}^1(\R^n)$ as $t\to\infty$.
 
Suppose now $u_0$ also satisfies $u_0=\phi\in L^{\infty}(\R^n)$. Then by an argument similar to the proof of Corollary 3.2 of \cite{CD}, there exists a constant $\lambda_2>0$ such that
\begin{equation*}
u_0\le v_{\lambda_2,\beta}\quad\mbox{ in }\R^n
\end{equation*}
Hence by maximum principle for solutions of \eqref{fde-eqn} in bounded domains (cf. Lemma 2.3 of \cite{DaK}) and the construction of solution \eqref{cauchy-problem} in \cite{Hs2},
\begin{align}\label{u-tilde-v-bd}
&u(x,t)\le e^{-\frac{2\beta}{1-m}t}v_{\lambda_2,\beta}(e^{-\beta t}x)\quad\forall x\in\R^n, t>0\notag\\
\Rightarrow\quad&\4{u}(x,t)\le v_{\lambda_2,\beta}(x)\qquad\qquad\,\forall x\in\R^n, t>0.
\end{align}  
Then by \eqref{u-tilde-eqn},  \eqref{u-tilde-v-bd}, and an argument similar to the proof on P.10 of \cite{CD}, the rescaled function $\4{u}(x,t)$ converges to  $v_{\lambda_1,\beta}$ uniformly in $C^{2,1}(E)$ for any compact subset $E\subset\R^n$ as $t\to\infty$. 

{\hfill$\square$\vspace{6pt}}  

Finally by Theorem \ref{higher-order-expansion-thm} and a similar argument as the proof of Theorem 3.6 and Proposition 3.9 of \cite{CD} we have the following result.

\begin{thm}\label{convergence-thm2}
Let $n\ge 3$, $0<m<\frac{n-2}{n}$, $m\ne\frac{n-2}{n+2}$, $\beta>0$, $0\le u_0=\phi+\psi$, $ \phi\in  L_{loc}^p(\R^n)$, $\psi\in L^1(\R^n)\cap L_{loc}^p(\R^n)$, for some constant $p>\frac{(1-m)n}{2}$, such that 
\begin{equation*}
K_2:=\limsup_{|x|\to\infty}\left[|x|^2u_0(x)^{1-m}-\frac{c_1}{\beta}\left(\log|x|-\frac{(n-2-(n+2)m)}{2(n-2-nm)}\log(\log |x|)\right)\right]<\infty.
\end{equation*}
holds and $\phi$ satisfies \eqref{phi-expansion} for some constant $K_1\in\R$ where $c_1=2(n-1)(n-2-nm)/(1-m)$.
If $u$ is the unique solution of \eqref{fde-eqn} in $\R^n\times (0,\infty)$ given by
Theorem 1.1 of \cite{Hs2}, then as $t\to\infty$, the rescaled function $\4{u}(x,t)$ given by \eqref{rescaled-soln} converges to  $v_{\lambda_1,\beta}$ uniformly in $C^{2,1}(E)$ for any compact subset $E\subset\R^n$ with $\lambda_1=\left(e^{2(K_1-K_0)}/\beta\right)^{\frac{1}{1-m}}$ where the constant $K_0$ is given by Theorem \ref{higher-order-expansion-thm}. Moreover
\begin{align*}
&\limsup_{|x|\to\infty}\left[|x|^2u(x,t)^{1-m}-\frac{c_1}{\beta}\left(\log|x|-\frac{n-2-(n+2)m}{2(n-2-nm)}\log(\log |x|)\right)\right]\notag\\
\le&K_2-\frac{2(n-1)(n-2-nm)}{(1-m)}t\quad\forall t\ge 0.
\end{align*}
\end{thm}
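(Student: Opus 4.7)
The plan is to extend Theorem \ref{convergence-thm} by exploiting the extra growth bound $K_2<\infty$ to produce a single static supersolution $v_{\lambda_2,\beta}$ whose time-rescaled version pointwise dominates $u$. This majorant accomplishes both tasks at once: it supplies the local $L^\infty$ control needed to upgrade the $L^1_{loc}$ convergence already furnished by Theorem \ref{convergence-thm} to $C^{2,1}_{loc}$ convergence, and it yields the quantitative asymptotic bound. This is the template of Theorem 3.6 and Proposition 3.9 of \cite{CD}, now available for every $m$ in the admissible range because Theorem \ref{higher-order-expansion-thm} supplies the expansion of $v_{\lambda,\beta}$ with the explicit constant term $\frac{c_1}{\beta}\bigl(\frac{1-m}{2}\log\lambda+\frac{1}{2}\log\beta+K_0\bigr)$, where $c_1=2(n-1)(n-2-nm)/(1-m)$.

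\emph{Selection of the majorant and pointwise bound on $\4{u}$.} Fix $\eta>0$. From Theorem \ref{higher-order-expansion-thm},
$$|y|^2 v_{\lambda,\beta}(y)^{1-m}-\frac{c_1}{\beta}\Bigl(\log|y|-\frac{n-2-(n+2)m}{2(n-2-nm)}\log\log|y|\Bigr)\longrightarrow\frac{c_1}{\beta}\Bigl(\tfrac{1-m}{2}\log\lambda+\tfrac{1}{2}\log\beta+K_0\Bigr)$$
as $|y|\to\infty$. Since $K_2<\infty$, I choose $\lambda_2=\lambda_2(\eta)$ large enough that the right-hand side exceeds $K_2+\eta$; then $\phi(x)\le v_{\lambda_2,\beta}(x)$ for $|x|\ge R_0$. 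The bounded region $|x|<R_0$ and the $L^1$ tail $\psi$ are absorbed by sandwiching $u_0$ between $v_{\lambda_2\mp\varepsilon,\beta}+\psi_\mp$ with $\psi_\pm\in L^1(\R^n)$ supported in a ball, exactly as in the proof of Theorem \ref{convergence-thm}. A direct calculation using \eqref{elliptic-eqn} shows that $U(x,t):=e^{-\frac{2\beta}{1-m}t}v_{\lambda_2,\beta}(e^{-\beta t}x)$ solves \eqref{fde-eqn}, and the maximum principle on bounded domains (Lemma 2.3 of \cite{DaK}) applied to the approximation scheme from \cite{Hs2}, together with the $L^1$ contraction of Lemma \ref{L1-comparison-lem} to push the $L^1$-error to zero as $t\to\infty$, yields $\4{u}(x,t)\le v_{\lambda_2,\beta}(x)+o_{L^1_{loc}}(1)$.

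\emph{$C^{2,1}$ convergence and the quantitative bound.} On any compact $E\subset\R^n$, $v_{\lambda_1,\beta}$ is smooth and strictly positive, and $\4{u}\to v_{\lambda_1,\beta}$ in $L^1_{loc}$ by Theorem \ref{convergence-thm}; the upper bound just established then traps $\4{u}(\cdot,t)$ between two positive constants on $E$ for all $t$ large. Equation \eqref{u-tilde-eqn} is therefore uniformly parabolic there, and Krylov--Safonov together with Schauder estimates furnish uniform $C^{2,1}$ bounds, which upgrade the $L^1_{loc}$ convergence to $C^{2,1}(E)$ convergence in the standard way. For the decay estimate, the pointwise comparison $u(x,t)\le e^{-\frac{2\beta}{1-m}t}v_{\lambda_2,\beta}(e^{-\beta t}x)$ gives, with $y=e^{-\beta t}x$,
$$|x|^2 u(x,t)^{1-m}\le|y|^2 v_{\lambda_2,\beta}(y)^{1-m}.$$
Substituting the expansion of Theorem \ref{higher-order-expansion-thm} at $y$ and using $\log|y|=\log|x|-\beta t$ together with $\log\log|y|=\log\log|x|+o(1)$ as $|x|\to\infty$, the right-hand side becomes $\frac{c_1}{\beta}\bigl(\log|x|-\frac{n-2-(n+2)m}{2(n-2-nm)}\log\log|x|\bigr)-\frac{2(n-1)(n-2-nm)}{1-m}t+(K_2+\eta)+o(1)$. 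Taking $\limsup_{|x|\to\infty}$ and then $\eta\to 0$ delivers the claim.

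\emph{Main obstacle.} The delicate point is the first step: because $\phi\in L^p_{loc}$ (not $L^\infty$) and $\psi$ lies only in $L^1$, one cannot impose $u_0\le v_{\lambda_2,\beta}$ pointwise, so $u_0$ must be sandwiched between $v_{\lambda_2\pm\varepsilon,\beta}+\psi_\pm$ with $\psi_\pm\in L^1$ supported in a bounded set; the initial data is then regularized by an additive constant $\delta>0$, $\delta\to 0$ is passed by monotone convergence, and finally $\varepsilon\to 0$ is sent after invoking the $L^1$ contraction — precisely the bookkeeping recipe already developed in the proof of Theorem \ref{convergence-thm} above. Everything else is either direct substitution into the expansion of Theorem \ref{higher-order-expansion-thm} or standard parabolic regularity, so this sandwich argument is the only non-routine piece of the proof.
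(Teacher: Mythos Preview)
Your proposal is correct and follows essentially the same approach as the paper, which proves Theorem \ref{convergence-thm2} by a single sentence invoking Theorem \ref{higher-order-expansion-thm} together with the argument of Theorem 3.6 and Proposition 3.9 of \cite{CD}. You have fleshed out precisely that template---using the $K_2<\infty$ hypothesis and the expansion of $v_{\lambda,\beta}$ to select a self-similar majorant, then upgrading the $L^1_{loc}$ convergence of Theorem \ref{convergence-thm} to $C^{2,1}_{loc}$ via parabolic regularity and reading off the quantitative decay from the comparison---so there is nothing to add.
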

 
\section{Appendix}
\setcounter{equation}{0}
\setcounter{thm}{0} 

For the sake of completeness, in this appendix we will state and prove the analogue of Lemma \ref{h-limit-lem1} for the case $m=\frac{n-2}{n+2}$, $n\ge 3$.

\begin{prop}(Proposition 2.3 of \cite{CD})
Let $n\ge 3$, $m=\frac{n-2}{n+2}$, $\lambda>0$ and $\beta>0$. Let $v$ be the solution of \eqref{elliptic-eqn} and $h$ be given by  \eqref{h-defn10} with $w$ given by \eqref{w-defn}. Then $h$ satisfies
\begin{equation*}
\lim_{s\to\infty}s^2\,h_s(s)=\frac{(6-n)(n-1)}{4\beta}.
\end{equation*}
\end{prop}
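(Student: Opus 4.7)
\noindent
My strategy is to specialize the ODE analysis used for Lemma \ref{h-limit-lem1} to the critical Yamabe exponent $m=\frac{n-2}{n+2}$, where several coefficients simplify and the forcing term in the equation for $h$ becomes one order weaker in $s$, so a sharper use of l'H\^opital's rule is required.

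First I would substitute $m=\frac{n-2}{n+2}$ into the identities used in the derivation of \eqref{h-eqn2}. A direct calculation gives
\begin{equation*}
n-2-(n+2)m=0,\quad b_0=0,\quad \frac{2(n-2-nm)}{1-m}=n-2,\quad \frac{1-2m}{1-m}=\frac{6-n}{4},
\end{equation*}
so that \eqref{h-eqn2} reduces to
\begin{equation*}
h_{ss}+\left((n-2)s+\frac{\beta}{n-1}h\right)h_s=\frac{6-n}{4}\cdot\frac{w_s^2}{w}\quad\mbox{in }\R.
\end{equation*}
The constant forcing $-b_0$ which drove the earlier analysis has vanished, so the leading behaviour of the right-hand side must be read off from $w_s^2/w$. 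From \eqref{w-decay-infty}, both $w_s$ and $w/s$ tend to $\frac{(n-1)(n-2)}{\beta}$ as $s\to\infty$, hence
\begin{equation*}
\lim_{s\to\infty}s\cdot\frac{w_s^2}{w}=\frac{(n-1)(n-2)}{\beta},
\end{equation*}
so the right-hand side is asymptotically $A/s$ with $A=\frac{(6-n)(n-1)(n-2)}{4\beta}$.

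Next I would mimic the integrating-factor argument of Lemma \ref{h-limit-lem1}, now using
\begin{equation*}
f(s)=\exp\left(\frac{n-2}{2}s^2+\frac{\beta}{n-1}\int_{s_1}^s h(z)\,dz\right),
\end{equation*}
which turns the ODE into $(fh_s)_s=fR(s)$ with $R(s)\sim A/s$. Since $h_s(s)\to 0$ and $h(s)=o(s)$ by Theorem \ref{w's-bd-thm}, $f$ grows super-polynomially. Given $\3>0$, for $s$ large enough $R$ is pinched between $(A\mp\3)/s$; integrating over $(s_1,s)$ and dividing by $f(s)$ sandwiches $s^2h_s(s)$ between
\begin{equation*}
\frac{s^2f(s_1)h_s(s_1)+(A\mp\3)\,s^2\int_{s_1}^s f(z)/z\,dz}{f(s)}.
\end{equation*}
Two applications of l'H\^opital, together with the preliminary fact $\int_{s_1}^s f(z)/z\,dz=o(f(s))$, evaluate
\begin{equation*}
\lim_{s\to\infty}\frac{s^2\int_{s_1}^s f(z)/z\,dz}{f(s)}=\frac{1-m}{2(n-2-nm)}=\frac{1}{n-2}
\end{equation*}
(and the auxiliary $s^2/f(s)\to 0$ by the same super-polynomial growth). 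Letting first $s\to\infty$ and then $\3\to 0$ gives $\lim_{s\to\infty}s^2 h_s(s)=A/(n-2)=\frac{(6-n)(n-1)}{4\beta}$, as claimed.

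The principal subtlety is that in the critical case the forcing is one order smaller than in Lemma \ref{h-limit-lem1}: it decays like $1/s$ rather than being a nonzero constant, so the decay of $h_s$ sharpens from $1/s$ to $1/s^2$ and the l'H\^opital bookkeeping is correspondingly more delicate — in particular one must separately establish the vanishing of $\int_{s_1}^s f(z)/z\,dz/f(s)$ before extracting the leading constant from the principal limit. Apart from this sharper evaluation, the overall structure of the argument is parallel to that of Lemma \ref{h-limit-lem1}.
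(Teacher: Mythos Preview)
Your proof is correct and follows essentially the same route as the paper: specialize \eqref{h-eqn2} to $m=\frac{n-2}{n+2}$, use Theorem \ref{w's-bd-thm} to identify the right-hand side as asymptotically $a_4/s$, and then run the integrating-factor plus l'H\^opital computation. The paper phrases the last step as ``an argument similar to the proof of Lemma \ref{h2-limit-lem}'' (which is exactly the $s^2\int_{s_1}^s f(z)/z\,dz/f(s)$ limit you spell out), whereas you cite Lemma \ref{h-limit-lem1}; since the integrating factor is identical and the relevant l'H\^opital step is precisely \eqref{f-ratio-limit}, there is no substantive difference.
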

\begin{proof}
This proposition is stated and proved in \cite{CD}. For the sake of completeness we will give a simple different proof of the proposition here. We first observe that by Theorem \ref{w's-bd-thm},
\begin{equation}\label{w-ratio-limit}
\lim_{s\to\infty}\frac{sw_s(s)^2}{w(s)}=\frac{2(n-1)(n-2-nm)}{(1-m)\beta}=\frac{(n-1)(n-2)}{\beta}.
\end{equation}
Let
\begin{equation*}
a_4=\frac{(1-2m)(n-1)(n-2)}{(1-m)\beta}.
\end{equation*}
Then by \eqref{h-eqn2} and \eqref{w-ratio-limit}   for any $0<\3<|a_4|/2$ there exists a constant $s_1\in\R$ such that
\begin{equation}\label{h-ineqn31}
(a_4-\3)s^{-1}\le h_{ss}+\left(\frac{2(n-2-nm)}{(1-m)}s+\frac{\beta}{n-1}h+\frac{n-2-(n+2)m}{1-m}\right)h_s\le (a_4+\3)s^{-1}\quad\forall s\ge s_1.
\end{equation}
By \eqref{h-ineqn31} and an argument similar to the proof of Lemma \ref{h2-limit-lem},
\begin{equation*}
\lim_{s\to\infty}s^2\,h_s(s)=\frac{(1-m)a_4}{2(n-2-nm)}=\frac{(6-n)(n-1)}{4\beta}
\end{equation*}
and the proposition follows.
\end{proof}

\end{document}